\newtheorem{theorem}{Theorem}[section]
\newtheorem{lemma}[theorem]{Lemma}
\newtheorem{proposition}[theorem]{Proposition}
\newtheorem{corollary}[theorem]{Corollary}
\theoremstyle{definition}
\theoremstyle{remark}
\numberwithin{equation}{section}
\def\Leb{\operatorname {\rm Leb}}
\def\leb{\operatorname {\rm leb}}
\def\iu{\sqrt{-1}\,}
\begin{document}

\title[The central limit theorem for  Riesz-Raikov sums II]{The central limit theorem for  Riesz-Raikov sums II}


\author{Katusi FUKUYAMA}
\address{Department of Mathematics, \\
Kobe University, \\
Rokko, Kobe, 657-8501, Japan\\
}
\email{fukuyama@math.kobe-u.ac.jp}
\thanks{This research is partially supported by JSPS KAKENHI 16K05204 and 15KT0106.
It was also partially supported by the Research Institute for Mathematical Sciences, 
a Joint Usage/Research Center located in Kyoto University.
}

\subjclass[2010]{Primary    42A55, 60F05}

\date{}

\dedicatory{Dedicated to Professor Norio K\^ono on his 80th birthday}

\begin{abstract}
For a $d\times d$ expanding matrix $A$, we investigate 
randomness of the sequence $\{A^k \boldsymbol x\}$ and prove the 
central limit theorem for $\sum f(A^k \boldsymbol x)$
where $f$ is a periodic function with a mild regularity condition. 
\end{abstract}

\maketitle


\bibliographystyle{amsplain}


\section{Introduction}

For $\theta > 1$ and  a real valued square integrable function $f$ on $\bf R$ with period 1 satisfying 
$\int_0^1 f=0$, regarded as  random variables on $[\,0,1\,]$ 
Riesz-Raikov sums $\sum_{k=1}^N f(\theta^k x)$ obey the central limit theorem. 
This fact was first proved by Fortet \cite{fortet} and Kac \cite{kac} when $\theta$ is an integer, and 
was extended to general case by Petit \cite{petit} and by the author \cite{1994F}. 
In \cite{2015F} 
the case when $\theta$ is a complex number was investigated.  
In this note, we consider a multidimensional analogue of this problem. 

Let $\mathbf R^d$ denote the vector space of $d$-dimensional real column vectors 
and $\widehat {\mathbf R}^d$ that of  real row vectors. 

Fan \cite{fan1} assumed a very mild condition on $A$ and 
proved that the sequence $\{A^n \boldsymbol x\}$ is uniformly distributed mod 1 
for a.e. $\boldsymbol x$, and  Lesigne \cite{lesigne} extended this result. 
Since the result of Fan implies the law of large numbers for $\{f(A^n \boldsymbol x)\}$
where $f$ is periodic and continuous, 
it is very natural to ask whether the central limit theorem holds. 

To have the central limit theorem, 
we here assume that $A$ is {\it expanding}, i.e., there exists a $q> 1$ such that 
\begin{equation}
\|A \boldsymbol x\|_2 \ge q \| \boldsymbol x\|_2 
\quad\hbox{for}\quad \boldsymbol x \in  \mathbf R^d,
\label{Eq:expx}
\end{equation}
or equivalently, 
\begin{equation}
\|\boldsymbol \xi A \|_2 \ge q \| \boldsymbol \xi\|_2 
\quad\hbox{for}\quad \boldsymbol \xi \in \widehat {\mathbf R}^d .
\label{Eq:expxi}
\end{equation}
Let $f$ be a real valued square integrable function on $\mathbf R^d$ which satisfies
\begin{equation}
f(\boldsymbol x+ \boldsymbol e_i) = f(\boldsymbol x) \quad(i=1, \dots, d)
\quad\hbox{and}\quad
\int_{[\,0,1)^d} f(\boldsymbol x)\,d\boldsymbol x=0,
\label{Eq:periodmean}
\end{equation}
where $\boldsymbol e_1$, \dots, $\boldsymbol e_d$ is the canonical basis of  $\mathbf R^d$. 
We denote the Fourier series of $f$ by
$$
f(\boldsymbol x) \sim \sum_{\boldsymbol \xi \in {\bf Z}^d}\widehat f(\boldsymbol \xi) 
\exp(2\pi \iu \boldsymbol \xi \boldsymbol x),
$$
where 
 ${\bf Z}^d$ is regarded as a subset of $\widehat {\mathbf R}^d$, 
and define the subsum $f_a$ by 
\begin{equation}\label{Eq:subsum}
f_a(\boldsymbol x) = \sum_{\boldsymbol \xi \in R_a}\widehat f(\boldsymbol \xi)
\exp(2\pi \iu \boldsymbol \xi \boldsymbol x) ,
\end{equation}
where
$R_a =\{\boldsymbol \xi=(\xi_1, \dots, \xi_d)\in {\bf Z}^d\setminus\{\boldsymbol 0\} 
\mid |\xi_1|, \dots, |\xi_d|\le a
\}$.
Note that we have $\widehat f(\boldsymbol 0)  = 0$. 
We assume that $f$ satisfies the condition
\begin{equation}\label{Eq:dini}
\sum_{m=1}^\infty \|f-f_{2^m}\|_{L^2[0,1)^d} < \infty.
\end{equation}\
Note that a function 
of bounded variation over $[\,0,1)^d$ in the sense of Hardy-Krause 
satisfies this condition (Cf. \cite{1968Z}). 
An easy sufficient condition for (\ref{Eq:dini}) is the $L^2$-Dini condition
\begin{equation}
\label{Eq:l2dini}
\int_0^1 \frac{\omega^{(2)}(H, f)}{H}\,dH<\infty,
\end{equation}
where 
$$
\omega^{(2)}(H, f)
=
\sup
\biggl\{\,\|f({}\cdot{}+\boldsymbol h)-f({}\cdot{})\|_{L^2[0,1]^d}\biggm| 
{\boldsymbol h=(h_1, \dots, h_d)\in {\bf R}^d, 
\atop
|h_1|, \dots, |h_d|\le H}\,\biggr\}
.
$$
This fact is stated in Lemma \ref{lem:l2dini}. 

To state our result, we introduce a quantity $\sigma^2(f)$, the limiting variance of
the central limit theorem, by 
\begin{equation}\label{Eq:lv}
\sigma^2(f) = 
\sum_{l\ge 0} (2-\delta_{0,l}) \sum_{\boldsymbol \xi, \boldsymbol \xi'\in {\bf Z}^d}
\widehat f(\boldsymbol \xi) \widehat f(\boldsymbol \xi')
{\mathbf 1}(\boldsymbol \xi+ \boldsymbol \xi'A^l=\boldsymbol 0)
.
\end{equation}
The series on the right hand side 
is  shown to be absolutely convergent under the conditions (\ref{Eq:expx})
and   (\ref{Eq:dini}). 
We denote the Lebesgue measure on $\mathbf R^d$ by $\Leb$. 

\begin{theorem}\label{thm:main}
Let $A$ be a $d\times d$ real matrix satisfying (\ref{Eq:expx}), 
and let $f$ be a real valued function on $\mathbf R^d$ satisfying
(\ref{Eq:periodmean}) and (\ref{Eq:dini}). 
Then for every bounded measurable  $\Gamma \subset \mathbf R^d$, 
\begin{equation}
\lim_{N\to\infty}
\int_{\Gamma}
\biggl(\frac1{\sqrt N} 
\sum_{k=1}^N 
f(A^k \boldsymbol x)
\biggr)^2 
\,d\boldsymbol x
=
\Leb(\Gamma)
\sigma^2(f),
\label{Eq:limvar}
\end{equation}
and
\begin{equation}
\lim_{N\to \infty}
\Leb
\biggl\{ \boldsymbol x \in \Gamma 
\biggm| 
\frac1{\sqrt N} 
\sum_{k=1}^N 
f(A^k \boldsymbol x)
\le t
\biggr\}
=
\Leb (\Gamma) 
\Phi_{\sigma^2(f)}(t) 
\label{Eq:CLT}
\end{equation}
for any $t\not=0$, 
where $\Phi_{\sigma^2(f)}$ is the distribution function of  $N(0, \sigma^2(f))$. 
If $\sigma^2(f)> 0$, then  (\ref{Eq:CLT}) holds also for $t=0$. 
\end{theorem}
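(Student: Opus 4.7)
The plan is a two-stage reduction: approximate $f$ by the trigonometric polynomials $f_{2^m}$ of (\ref{Eq:subsum}), which reduces (\ref{Eq:limvar}) and (\ref{Eq:CLT}) to the case of finite spectra; then prove the polynomial case by Fourier-analytic estimates driven by the expansion condition (\ref{Eq:expxi}).

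\smallskip

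\emph{Variance and tail control.} For a trigonometric polynomial $P$ with spectrum in a finite set $R\subset{\bf Z}^d\setminus\{\boldsymbol 0\}$, expanding the square and integrating over $\Gamma$ gives
\[
\int_\Gamma\Bigl|\sum_{k=1}^N P(A^k\boldsymbol x)\Bigr|^2\,d\boldsymbol x
=\sum_{k,k'=1}^N\sum_{\boldsymbol\xi,\boldsymbol\xi'\in R}
\widehat P(\boldsymbol\xi)\overline{\widehat P(\boldsymbol\xi')}\,
\widehat{{\mathbf 1}_\Gamma}\bigl(\boldsymbol\xi A^k-\boldsymbol\xi'A^{k'}\bigr).
\]
Condition (\ref{Eq:expxi}) forces $\boldsymbol\xi A^k=\boldsymbol\xi'A^{k'}$ to have, for each fixed $\boldsymbol\xi,\boldsymbol\xi'\in R$, only finitely many solutions $(k,k')$, and off these solutions one has a lower bound $\|\boldsymbol\xi A^k-\boldsymbol\xi'A^{k'}\|_2\ge c_R\,q^{\min(k,k')}$, improved to $c_R\,q^{\max(k,k')}$ once $|k-k'|$ exceeds a constant depending only on $R$. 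The diagonal terms contribute $N\Leb(\Gamma)\sigma^2(P)+O(1)$, while the off-diagonal terms are $o(N)$ by decay of $\widehat{{\mathbf 1}_\Gamma}$ along the escaping frequencies. Applying the same identity to $P=f-f_{2^m}$ and combining with (\ref{Eq:dini}) via Minkowski bounds $\|S_N(f-f_{2^m})\|_{L^2(\Gamma)}/\sqrt N$ uniformly in $N$ by a quantity vanishing as $m\to\infty$; the same bookkeeping proves absolute convergence of (\ref{Eq:lv}) and establishes (\ref{Eq:limvar}).

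\smallskip

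\emph{CLT for polynomials via blocking.} For (\ref{Eq:CLT}) I would argue by characteristic functions and Bernstein blocking. Split $\{1,\dots,N\}$ into alternating long blocks $H_j$ of length $L$ and short gaps $I_j$ of length $s$, with $L,s\to\infty$, $s/L\to 0$ and $N/(L+s)\to\infty$. The gap contribution has $L^2(\Gamma)$-norm $o(\sqrt N)$ by the variance estimate and is negligible. For the long-block sum write the characteristic function
\[
\int_\Gamma\exp\Bigl(\iu t\sum_j\sum_{k\in H_j}f_{2^m}(A^k\boldsymbol x)\big/\sqrt N\Bigr)\,d\boldsymbol x
\]
as a product of block-exponentials and Taylor-expand each to second order. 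Cross-block monomials involve exponentials $\exp(2\pi\iu\boldsymbol\eta\boldsymbol x)$ with $\|\boldsymbol\eta\|_2\ge c_R\,q^s$ (again by (\ref{Eq:expxi}) across the gap of length $s$), so they integrate against ${\mathbf 1}_\Gamma$ to a quantity vanishing uniformly as $s\to\infty$. What survives is the diagonal, which yields the Gaussian limit $\Leb(\Gamma)\exp(-t^2\sigma^2(f_{2^m})/2)$. Letting $m\to\infty$ with the tail bound transfers the CLT to $f$, and (\ref{Eq:CLT}) follows from L\'evy's continuity theorem applied to the probability measure $\Leb|_\Gamma/\Leb(\Gamma)$; the continuity of $\Phi_{\sigma^2(f)}$ when $\sigma^2(f)>0$ accounts for the case $t=0$.

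\smallskip

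\emph{Main obstacle.} The delicate point throughout is controlling sums of $\widehat{{\mathbf 1}_\Gamma}$ at the generally non-integer frequencies $\boldsymbol\xi A^k-\boldsymbol\xi'A^{k'}$: for an arbitrary bounded measurable $\Gamma$ one has only qualitative Riemann-Lebesgue decay, which is not obviously summable against the $\approx N^2$ off-diagonal pairs. I expect to circumvent this by first proving both assertions for $\Gamma$ a finite union of rectangles -- where $|\widehat{{\mathbf 1}_\Gamma}(\boldsymbol\eta)|$ enjoys the quantitative rate $\prod_i(1+|\eta_i|)^{-1}$ that renders everything summable -- and then extending to arbitrary $\Gamma$ by approximation, using the uniform-in-$N$ continuity of $\Gamma\mapsto\int_\Gamma|S_N/\sqrt N|^2\,d\boldsymbol x$ supplied by the variance bound on a fundamental domain together with the periodicity of $f$. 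A secondary subtlety is that, unlike in the integer-frequency setting of Fortet and Kac, no arithmetic on $\boldsymbol\xi A^k$ is available, so every estimate must be driven purely by the metric expansion property (\ref{Eq:expxi}).
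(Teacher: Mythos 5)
Your outer structure---approximate $f$ by the trigonometric polynomials $f_a$, prove everything for $f_a$, and transfer via an $L^2$ bound on $\sum_{k\le N}(f-f_a)(A^k\boldsymbol x)/\sqrt N$ that is uniform in $N$ and small in $a$---is exactly the paper's, and your variance step is essentially sound: for \emph{two-term} frequencies one indeed has $\|\boldsymbol\xi A^k+\boldsymbol\xi'A^{k'}\|_2\ge q^{k}\,(q^{k'-k}\|\boldsymbol\xi'\|_2-\|\boldsymbol\xi\|_2)$ directly from (\ref{Eq:expxi}), and this is all that (\ref{Eq:limvar}) and the tail control require. Your worry about summing $\widehat{{\mathbf 1}_\Gamma}$ over non-integer frequencies is legitimate, but the paper resolves it more cleanly than your rectangles-plus-approximation plan: it majorizes ${\mathbf 1}_\Gamma\le D\rho$ by a fixed positive weight whose Fourier transform is compactly supported, so every off-resonant term vanishes identically rather than merely decaying. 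For the CLT itself you take a genuinely different route (Bernstein blocking and characteristic functions) from the paper (a martingale approximation along a filtration of cubes adapted to the real Jordan basis of $A$, followed by the Monrad--Philipp strong approximation theorem, which yields an almost sure invariance principle and hence more than convergence in law); blocking is in principle a viable alternative for this class of problems.

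The genuine gap is the one estimate your blocking argument cannot avoid: a lower bound on $\|\sum_{s}\pm\boldsymbol\xi_sA^{k_s}\|$ for combinations of \emph{three or more} terms, which appear as soon as you expand the product of block exponentials (or, in the telescoping version, when you decouple one block from the trigonometric polynomial generated by all earlier blocks). You assert these frequencies are $\ge c_Rq^{s}$ ``by (\ref{Eq:expxi}) across the gap,'' but (\ref{Eq:expxi}) together with the triangle inequality does not deliver this once $A$ has eigenvalues of different moduli: the certified lower bound for the top term is only $\|\boldsymbol\xi A^{k_{\max}}\|_2\ge q^{k_{\max}}\|\boldsymbol\xi\|_2$, whereas a lower-indexed term can be as large as $C|\lambda_{\max}|^{k}k^{d}\|\boldsymbol\xi\|$, and for $q<|\lambda_{\max}|$ the subtracted terms eventually swamp the leading one even when $k\le k_{\max}-s$. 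This is precisely why the paper spends Section 2 passing to the real Jordan form, introducing the quantities $m_{i(h)}$ that track the first nonvanishing Jordan block of $\boldsymbol\xi Q$, and proving bounds such as $|\lambda_h^{k_4p-r}\pm\lambda_h^{k_3p-r}\pm\lambda_h^{k_2p-r}\pm\lambda_h^{k_1p-r}|\ge\tfrac12\Lambda^{k_4p-r}$ after thinning the time indices to progressions of step $p$ with $\Lambda^{3p}$ large: the cancellation analysis must be carried out coordinatewise at a single eigenvalue's geometric scale, not in the ambient norm. Without this (or an equivalent device), neither your cross-block near-orthogonality nor any fourth-moment input is established, and the proof of (\ref{Eq:CLT}) does not close. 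A secondary, fixable point: in (\ref{Eq:limvar}) you still need to justify interchanging the Ces\`aro limit in $k$ with the sum over $l=k'-k$; the paper's compactly supported majorant, or dominated convergence using the geometric decay in $k$ for each fixed $l$ together with the absolute convergence (\ref{Eq:serconv}) supplied by (\ref{Eq:dini}), does this.
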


In the case when every coefficient of $A$ is an integer, 
that is the case when $A^n$ are endomorphisms on ${\bf T}^d$, 
Leonov \cite{leonov}, Fan \cite{fan0}, 
Levin \cite{levin} and Conze, Le Borgne, and Roger \cite{conze} assumed 
so called the partially expanding condition
and proved the central limit theorem. 

L\"obbe \cite{lobbe} 
proved the central limit theorem, the law of the iterated logarithm, 
and the metric discrepancy results for $\{A_n \boldsymbol x\}$, 
where 
$A_n$ is a $d\times d$ matrix with integer coefficients satisfying
$$
\|\boldsymbol j A_{n+k}\|_\infty \ge q^k  \|A_n\|_\infty
\quad\hbox{if}\quad \boldsymbol j\in {\bf Z}^d\quad\hbox{and}\quad k\ge \log _q \|\boldsymbol j\|
$$
for some $q> 1$. 

By putting $\boldsymbol j = (1, 0)$ and $1<c_1<c_2$, 
we see that this condition cannot be satisfied even by 
$$
A_n = 
\begin{pmatrix}
c_1 & 0 
\\
0   & c_2
\end{pmatrix}^n
.
$$
This example suggests the following inference. 
For a matrix $A$,  the sequence of matrix $A_n = A^n$ satisfies this condition 
only if absolute values of characteristic values of $A$ are all equal. 

Since the condition (\ref{Eq:expx}) implies that modulus of all eigenvalues of $A$ are 
greater than one, 
we expect that the central limit theorem is valid under this assumption. 
But we could not prove because of some technical reason. 

In the case of endomorphisms on ${\bf T}^d$, 
if $\sigma(f)=0$, one can express 
$f(\boldsymbol x)= g(A\boldsymbol x)- g(\boldsymbol x)$ by using some locally square integrable 
periodic $g$. 
We could not prove it in our case, 
since the situation is more complicated. 
For example if 
$$
A = 
\begin{pmatrix}
\root 3\of 2 & 0 
\\
0   & \root 2 \of 3
\end{pmatrix}
, 
$$
we have $\sigma(f)=0$ for the function $f$ of the form
$$f(x) = g_1(A^6 \boldsymbol x)- g_1( \boldsymbol x)
+ g_2(A^3 \boldsymbol x) - g_2( \boldsymbol x)
+ g_3(A^2 \boldsymbol x) - g_3( \boldsymbol x)
$$
where $g_1$, $g_2$, and $g_3$ are locally square integrable and periodic, 
$g_2$ does not depend on $x_2$, and $g_3$ does not depend of $x_1$. 
Although by a similar argument as one-dimensional case, 
we can prove that $\sigma(f)=0$ implies this expression,
we do not have the result for general case so far.  

\section{Real Jordan form and related estimates}

In this section we state some preliminary facts. 

Irrespectively of the value of $\gamma\in \mathbf N$, 
for  vectors $\boldsymbol x={}^T(x_1, \dots, x_\gamma)\in \mathbf R^\gamma$
and $\boldsymbol \xi= (\xi_1, \dots, \xi_\gamma)\in \widehat{\mathbf R}^\gamma$, 
we define 
$\|\boldsymbol x\|= \max_{\delta\le \gamma}|x_\delta|$
and $\|\boldsymbol \xi\|= \max_{\delta\le \gamma}|\xi_\delta|$.
Abusing the notation we denote by $\boldsymbol 0$  zero vectors in 
${\mathbf R}^\gamma$ and $\widehat{\mathbf R}^\gamma$ for any $\gamma$. 

For $\lambda\in {\bf R}$ 
a standard Jordan block $J_\gamma(\lambda)$ is a $\gamma\times \gamma$ matrix,
and 
for $\lambda\in {\bf C}\setminus {\bf R}$ a real Jordan block $C_\gamma(\lambda)$ is a  $2\gamma\times 2\gamma$ matrix 
defined as follows.
$$
J_\gamma(\lambda)=
\begin{pmatrix}
\lambda  & 1 &  & \smash{\raise -4pt\hbox{\Large  0}}
\\
                    &\lambda  & \ddots &  
\\
                    &                    & \ddots & 1
\\
\smash{ \raise 0pt\hbox{\Large  0}} &         &        & \lambda
\end{pmatrix},
\quad
C_\gamma(\lambda)=
\begin{pmatrix}
|\lambda | Z_\theta & Z_0 &  & \smash{\raise -4pt\hbox{\Large \bf 0}}
\\
                    &|\lambda | Z_\theta & \ddots &  
\\
                    &                    & \ddots & Z_0
\\
\smash{ \raise 0pt\hbox{\Large \bf 0}} &         &        & |\lambda | Z_\theta 
\end{pmatrix},
$$
where 
$
Z_\theta = \begin{pmatrix} \cos \theta & -\sin \theta \\ \sin\theta & \hfill\cos \theta \end{pmatrix}
$ and  
$\theta = \arg \lambda$.

It is known (See e.g. Theorem 6.65 of \cite{shirov})
that a real matrix is similar to the matrix of the form
$$
B=
\begin{pmatrix}
J_{d_1}(\lambda_1) & & &       &        &\smash{\raise -4pt\hbox{\Large  0}}
\\
     & \ddots 
\\
     &        & J_{d_\alpha}(\lambda_\alpha) &
\\
     &        &        & C_{d_{\alpha+1}/2}(\lambda_{\alpha+1})
\\
     &        &        &       & \ddots
\\
\smash{\raise 0pt\hbox{\Large  0}} &    &    &    &    & C_{d_{\beta}/2}(\lambda_{\beta})
\end{pmatrix},
$$
where 
$\lambda_1$, \dots, $\lambda_\alpha\in \mathbf R$, 
$\lambda _{\alpha+1}$, $\bar \lambda _{\alpha+1}$, \dots , $\lambda _{\beta}$, $\bar \lambda _{\beta}
\in \mathbf C \setminus \mathbf R$
are characteristic values, 
i.e. 
there exists a real regular matrix 
$$
Q = (\boldsymbol q_1^{(1)}, \dots, \boldsymbol q_{d_1}^{(1)}, 
\boldsymbol q_1^{(2)}, \dots, \boldsymbol q_{d_2}^{(2)}, \dots, 
\boldsymbol q_1^{(\beta)}, \dots, \boldsymbol q_{d_\beta}^{(\beta)}
)$$
such that 
$A = Q B Q^{-1}$. 
We simply write
$
B = 
\begin{pmatrix}
B_1 &                                   &        \smash{\raise -4pt\hbox{\Large \bf 0}}
\\
    & \ddots 
\\
\smash{\raise 0pt\hbox{\Large \bf 0}}    &        & B_\beta
\end{pmatrix}
.$

Since $Q$ is regular, there exists a constant $1< D_{1,Q}< \infty$ such that
\begin{align}
& \frac{\| Q \boldsymbol x \|}{\|  \boldsymbol x \|}, 
\frac{\| Q^{-1} \boldsymbol x \|}{\|  \boldsymbol x \|}, 
\frac{\| \boldsymbol \xi Q  \|}{\|  \boldsymbol \xi \|}, 
\frac{\| \boldsymbol \xi Q^{-1}  \|}{\|  \boldsymbol \xi \| }
\in (D_{1,Q}^{-1}, D_{1,Q})
\quad(\boldsymbol x \not=\boldsymbol 0, 
\boldsymbol \xi \not=\boldsymbol 0).
\label{Eq:xQest}
\end{align}

Denoting the span of  $\boldsymbol q_1^{(h)}$, \dots , $\boldsymbol q_{d_h}^{(h)}$
by $W_h$, 
we have 
$$
\mathbf R^d = \bigoplus _{h=1}^\beta W_h, 
\quad \dim W_h  = d_h
\quad\hbox{and}
\quad \sum_{h=1}^\beta d_h = d
.$$

By calculating $J_\gamma(\lambda)^k$ and $C_\gamma(\lambda)^k$ for $|\lambda|>1$, we have
$\|J_\gamma(\lambda)^k \boldsymbol x\| \le \gamma k^\gamma |\lambda|^k \|\boldsymbol x\|$
and 
$\|C_\gamma(\lambda)^k \boldsymbol x\| \le 2\gamma k^\gamma |\lambda|^k \|\boldsymbol x\|$.
Hence there exists a constant $D_{2,A}$ depending only on $A$ such that 
\begin{equation}
\|A^k \boldsymbol x_h\| \le  D_{2,A} |\lambda_h|^k k^d \|\boldsymbol x_h\| 
\quad(\boldsymbol x_h\in W_h).
\label{Eq:upperb}
\end{equation}

Denote the $\delta$-th component of $\boldsymbol \xi$ by $(\boldsymbol \xi)_\delta$, 
and the 2-dimensional vector consisting of the $\delta$-th and the $(\delta+1)$-th components 
of $\boldsymbol \xi$ by $(\boldsymbol \xi)_{\delta,\delta+1}$. 

Put 
$m_1(\boldsymbol 0)=0$ and 
 $m_1(\boldsymbol \xi)= |\xi_\delta|$ if 
 $\boldsymbol \xi = (\xi_1, \dots, \xi_\gamma)\in \widehat{\mathbf R}^\gamma$
satisfies $\xi_1= \cdots = \xi_{\delta-1}=0 \not= \xi_\delta$.
In the last case, 
we have
$
(\boldsymbol \xi J_\gamma(\lambda)^k )_1= \cdots 
= (\boldsymbol \xi J_\gamma(\lambda)^k )_{\delta-1}= 0$, 
$(\boldsymbol \xi J_\gamma(\lambda)^k )_{\delta}= \lambda ^k\xi_\delta$, 
or $m_1(\boldsymbol \xi J_\gamma(\lambda)^k )= |\lambda| ^km_1(\boldsymbol \xi )$. 

Put $m_2(\boldsymbol 0)=0$ and 
$m_2(\boldsymbol \xi)= \|\boldsymbol \eta_\delta\|_2$ 
if $\boldsymbol \xi = (\boldsymbol \eta_1, \dots, \boldsymbol \eta_\gamma)\in 
\widehat{\mathbf R}^{2\gamma}$ 
($\boldsymbol \eta_1$, \dots, $\boldsymbol \eta_\gamma\in \widehat{\mathbf R}^2$)
satisfies
 $\boldsymbol \eta_1 = \cdots = \boldsymbol \eta_{\delta-1}=\boldsymbol 0\not=\boldsymbol \eta_\delta$. 
In the last case 
we have
$(\boldsymbol \xi C_\gamma(\lambda)^k)_{1,2}= \cdots
=(\boldsymbol \xi C_\gamma(\lambda)^k)_{2\delta-3,2\delta-2}= \boldsymbol 0$ and 
$
(\boldsymbol \xi C_\gamma(\lambda)^k)_{2\delta-1,2\delta}
=
|\lambda|^k \boldsymbol \eta_\delta Z_{k\theta}
$,  or 
$m_2(\boldsymbol \xi C_\gamma(\lambda)^k)= |\lambda|^k m_2(\boldsymbol \xi)$. 

For $\boldsymbol \xi\in {\bf Z}^d\setminus\{\boldsymbol 0\}$, 
we write $\boldsymbol \xi Q = (\widetilde{\boldsymbol \xi}_1, \dots, \widetilde{\boldsymbol \xi}_\beta)$
where $\widetilde {\boldsymbol \xi}_h \in \widehat {\mathbf R}^{d_h}$. 
Put $i(h) = 1$ for $h\in [\,1,\alpha\,]$ and $i(h) = 2$ for $h\in [\,\alpha+1, \beta\,]$. 
The  arguments above show that 
$
m_{i(h)}
(\widetilde{\boldsymbol \xi}_{h} B_{h}^k )
=  |\lambda_{h}|^k
m_{i(h)}
(\widetilde{\boldsymbol \xi}_{h})
$.

We denote $\Lambda =\min_{h\le\beta}|\lambda_h|> 1$. 
Since $Q$ is regular, 
there exists an $h\le \beta$ such that $\widetilde{\boldsymbol \xi}_h 
\not = \boldsymbol 0$. 
We denote the smallest such $h$ by $h({\boldsymbol \xi})$. 
Denote 
$
D_{3,a} 
=
\min_{\boldsymbol \xi\in R_a}
m_{i(h({\boldsymbol \xi}))}(\widetilde {\boldsymbol \xi}_{h({\boldsymbol \xi})} )
$. 
The arguments above show that 
\begin{equation}
m_{i(h({\boldsymbol \xi}))}
(\widetilde{\boldsymbol \xi}_{h({\boldsymbol \xi})} B_{h({\boldsymbol \xi})}^k )
\ge 
D_{3,a}  |\lambda_{h({\boldsymbol \xi})}|^k
\ge  D_{3,a}\Lambda^k
\quad(\boldsymbol \xi \in R_a)
.
\label{Eq:lowerb}
\end{equation}

We put
\begin{align*}
D_{4,a}(K)&=
\min
\biggl\{
m_{i(h({\boldsymbol \xi}))}(\widetilde {\boldsymbol \xi}_{h({\boldsymbol \xi})} 
B_{h({\boldsymbol \xi})}^k + \widetilde {\boldsymbol \xi}'_{h({\boldsymbol \xi})})
\biggm|
{0\le k< K,\ \boldsymbol \xi, \boldsymbol \xi'\in R_a, \hfill\atop
\widetilde {\boldsymbol \xi}_{h({\boldsymbol \xi})} 
B_{h({\boldsymbol \xi})}^k + \widetilde {\boldsymbol \xi}'_{h({\boldsymbol \xi})}
\not={\bf 0}
}
\biggr\},
\end{align*}
and prove that $D_{4,a}:=D_{4,a}(\infty)$ is positive. Put 
$\ell_1 = \max\{ m_{i(h)}(\widetilde {\boldsymbol \xi}'_h) \mid \boldsymbol \xi'\in R_a, h\le\beta\}$ 
and
$\ell_2 = \min\{ m_{i(h)}(\widetilde {\boldsymbol \xi}'_h) \mid \boldsymbol \xi'\in R_a, h\le\beta, 
m_{i(h)}(\widetilde {\boldsymbol \xi}'_h) \not=0\}> 0$. 
By (\ref{Eq:lowerb}), 
there exists a $K_0$ such that $k\ge K_0$ implies 
$m_{i(h({\boldsymbol \xi}))}
(\widetilde{\boldsymbol \xi}_{h({\boldsymbol \xi})} B_{h({\boldsymbol \xi})}^k )
\ge 2\ell_1$. 

For  $k\ge K_0$, we can verify
$m_{i(h({\boldsymbol \xi}))}
(\widetilde{\boldsymbol \xi}_{h({\boldsymbol \xi})} B_{h({\boldsymbol \xi})}^k + 
\widetilde{\boldsymbol \xi}_{h({\boldsymbol \xi})}')
\ge \ell_1\wedge \ell_2$. 
Here we prove it in the case $h({\boldsymbol \xi})=1$. 
The other case can be proved in the same way. 
Suppose that
$(\widetilde {\boldsymbol \xi}_{h({\boldsymbol \xi})}B_{h({\boldsymbol \xi})}^k)_1=
\cdots = (\widetilde {\boldsymbol \xi}_{h({\boldsymbol \xi})}B_{h({\boldsymbol \xi})}^k)_{\delta-1}=0$, 
 $|(\widetilde {\boldsymbol \xi}_{h({\boldsymbol \xi})}B_{h({\boldsymbol \xi})}^k)_{\delta}|=
m_{1}(\widetilde {\boldsymbol \xi}_{h({\boldsymbol \xi})} 
B_{h({\boldsymbol \xi})}^k )
$.
If $\widetilde{\boldsymbol \xi}_{h({\boldsymbol \xi})}'={\bf 0}$, 
we have
$\widetilde {\boldsymbol \xi}_{h({\boldsymbol \xi})} 
B_{h({\boldsymbol \xi})}^k + \widetilde {\boldsymbol \xi}'_{h({\boldsymbol \xi})}
=\widetilde {\boldsymbol \xi}_{h({\boldsymbol \xi})} 
B_{h({\boldsymbol \xi})}^k
$ and 
$
m_{1}(\widetilde {\boldsymbol \xi}_{h({\boldsymbol \xi})} 
B_{h({\boldsymbol \xi})}^k + \widetilde {\boldsymbol \xi}'_{h({\boldsymbol \xi})})
=
m_{1}(\widetilde {\boldsymbol \xi}_{h({\boldsymbol \xi})} 
B_{h({\boldsymbol \xi})}^k)
\ge 2\ell _1$. 
If not, suppose that
$(\widetilde {\boldsymbol \xi}'_{h({\boldsymbol \xi})})_1 =
\dots =(\widetilde {\boldsymbol \xi}'_{h({\boldsymbol \xi})})_{\delta'-1} =0$ 
and $|(\widetilde {\boldsymbol \xi}'_{h({\boldsymbol \xi})})_{\delta'}| =
m_1(\widetilde {\boldsymbol \xi}'_{h({\boldsymbol \xi})})_{\delta'} $. 
If $\delta'< \delta$, 
then we have
$(\widetilde {\boldsymbol \xi}_{h({\boldsymbol \xi})} 
B_{h({\boldsymbol \xi})}^k + \widetilde {\boldsymbol \xi}'_{h({\boldsymbol \xi})})_1
=
\cdots
=
(\widetilde {\boldsymbol \xi}_{h({\boldsymbol \xi})} 
B_{h({\boldsymbol \xi})}^k + \widetilde {\boldsymbol \xi}'_{h({\boldsymbol \xi})})_{\delta'-1}
=0
$
and 
$
|(\widetilde {\boldsymbol \xi}_{h({\boldsymbol \xi})} 
B_{h({\boldsymbol \xi})}^k + \widetilde {\boldsymbol \xi}'_{h({\boldsymbol \xi})})_{\delta'}|
=
|(\widetilde {\boldsymbol \xi}'_{h({\boldsymbol \xi})})_{\delta'}|
=
m_1(\widetilde {\boldsymbol \xi}'_{h({\boldsymbol \xi})})_{\delta'}
\ge \ell_2
$. 
Hence we have
$m_1
(\widetilde {\boldsymbol \xi}_{h({\boldsymbol \xi})} 
B_{h({\boldsymbol \xi})}^k + \widetilde {\boldsymbol \xi}'_{h({\boldsymbol \xi})})
\ge \ell_2$
in this case. 
If $\delta'= \delta$, 
then we have
$(\widetilde {\boldsymbol \xi}_{h({\boldsymbol \xi})} 
B_{h({\boldsymbol \xi})}^k + \widetilde {\boldsymbol \xi}'_{h({\boldsymbol \xi})})_1
=
\cdots
=
(\widetilde {\boldsymbol \xi}_{h({\boldsymbol \xi})} 
B_{h({\boldsymbol \xi})}^k + \widetilde {\boldsymbol \xi}'_{h({\boldsymbol \xi})})_{\delta-1}
=0
$
and 
$
|(\widetilde {\boldsymbol \xi}_{h({\boldsymbol \xi})} 
B_{h({\boldsymbol \xi})}^k + \widetilde {\boldsymbol \xi}'_{h({\boldsymbol \xi})})_{\delta}|
\ge
|(\widetilde {\boldsymbol \xi}_{h({\boldsymbol \xi})} 
B_{h({\boldsymbol \xi})}^k )_{\delta}|
-
|(\widetilde {\boldsymbol \xi}'_{h({\boldsymbol \xi})})_{\delta}|
\ge 2\ell_1-\ell_1
$. 
Hence we have
$m_1
(\widetilde {\boldsymbol \xi}_{h({\boldsymbol \xi})} 
B_{h({\boldsymbol \xi})}^k + \widetilde {\boldsymbol \xi}'_{h({\boldsymbol \xi})})
\ge \ell_1$
in this case. 
If $\delta'>  \delta$, 
then we have
$(\widetilde {\boldsymbol \xi}_{h({\boldsymbol \xi})} 
B_{h({\boldsymbol \xi})}^k + \widetilde {\boldsymbol \xi}'_{h({\boldsymbol \xi})})_1
=
\cdots
=
(\widetilde {\boldsymbol \xi}_{h({\boldsymbol \xi})} 
B_{h({\boldsymbol \xi})}^k + \widetilde {\boldsymbol \xi}'_{h({\boldsymbol \xi})})_{\delta-1}
=0
$
and 
$
|(\widetilde {\boldsymbol \xi}_{h({\boldsymbol \xi})} 
B_{h({\boldsymbol \xi})}^k + \widetilde {\boldsymbol \xi}'_{h({\boldsymbol \xi})})_{\delta}|
=
|(\widetilde {\boldsymbol \xi}_{h({\boldsymbol \xi})} 
B_{h({\boldsymbol \xi})}^k )_{\delta}|
\ge 2\ell_1
$. 
Hence we have
$m_1
(\widetilde {\boldsymbol \xi}_{h({\boldsymbol \xi})} 
B_{h({\boldsymbol \xi})}^k + \widetilde {\boldsymbol \xi}'_{h({\boldsymbol \xi})})
\ge 2\ell_1$
in this case.

Since $D_{4,a}(K_0)$ 
is positive, 
we have  $D_{4,a}> 0$. 

\begin{lemma}\label{lem:l2dini}
The condition (\ref{Eq:l2dini}) implies the condition (\ref{Eq:dini}). 
\end{lemma}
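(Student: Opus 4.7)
The plan is to bound $\|f-f_{2^m}\|_{L^2}$ by a constant multiple of $\omega^{(2)}(2^{-m},f)$, and then convert the integral condition (\ref{Eq:l2dini}) into summability by exploiting that $H\mapsto \omega^{(2)}(H,f)$ is non-decreasing.

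First I would apply Parseval. Since $\boldsymbol\xi\notin R_{2^m}$ forces $|\xi_j|>2^m$ for some $j$, a union bound gives
\begin{equation*}
\|f-f_{2^m}\|_{L^2}^2
= \sum_{\boldsymbol\xi\notin R_{2^m}\cup\{\boldsymbol 0\}}|\widehat f(\boldsymbol\xi)|^2
\le \sum_{j=1}^{d}\sum_{|\xi_j|>2^m}|\widehat f(\boldsymbol\xi)|^2.
\end{equation*}
So it suffices to control each inner sum by $\omega^{(2)}(2^{-m},f)^2$.

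Second, I would test the modulus of continuity against single-coordinate translations $\boldsymbol h=h\boldsymbol e_j$ with $|h|\le H:=2^{-m}$. By Parseval,
\begin{equation*}
\|f(\cdot+h\boldsymbol e_j)-f(\cdot)\|_{L^2}^2
= 4\sum_{\boldsymbol\xi}|\widehat f(\boldsymbol\xi)|^2\sin^2(\pi\xi_j h)
\le \omega^{(2)}(H,f)^2.
\end{equation*}
Integrating $h$ over $[0,H]$, the left-hand side is at most $H\,\omega^{(2)}(H,f)^2$. For $|\xi_j|>2^m$, the substitution $u=\pi\xi_j h$ gives $\pi|\xi_j|H>\pi$, and using $\int_0^T\sin^2 u\,du = T/2-\sin(2T)/4\ge T/4$ for $T\ge\pi$, one obtains a uniform lower bound $\int_0^H\sin^2(\pi\xi_j h)\,dh\ge c_0 H$. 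Hence $\sum_{|\xi_j|>2^m}|\widehat f(\boldsymbol\xi)|^2\le C\,\omega^{(2)}(2^{-m},f)^2$, and summing over $j$ yields
\begin{equation*}
\|f-f_{2^m}\|_{L^2}\le C'\,\omega^{(2)}(2^{-m},f).
\end{equation*}

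Finally, monotonicity of $\omega^{(2)}(\cdot,f)$ gives, for each $m\ge 1$,
\begin{equation*}
\int_{2^{-m}}^{2^{-(m-1)}}\frac{\omega^{(2)}(H,f)}{H}\,dH \ge \omega^{(2)}(2^{-m},f)\log 2,
\end{equation*}
and summing the intervals $[2^{-m},2^{-(m-1)}]$ covers $(0,1]$. Therefore
\begin{equation*}
\sum_{m=1}^\infty\|f-f_{2^m}\|_{L^2}
\le C'\sum_{m=1}^\infty\omega^{(2)}(2^{-m},f)
\le \frac{C'}{\log 2}\int_0^1\frac{\omega^{(2)}(H,f)}{H}\,dH<\infty,
\end{equation*}
which is (\ref{Eq:dini}). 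There is no substantive obstacle here; the only point requiring a line of care is the uniform lower bound on the sine-squared integral when the relevant frequency $|\xi_j|H$ crosses the threshold $1$, which is handled by the elementary estimate above.
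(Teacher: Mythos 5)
Your proof is correct and follows essentially the same route as the paper: Parseval for the translation difference, averaging over small shifts $\boldsymbol h$ to produce a uniform lower bound on the high-frequency Fourier mass, and a dyadic decomposition of the Dini integral using monotonicity of $\omega^{(2)}(\cdot,f)$. The only (harmless) difference is that you average over one-dimensional shifts $h\boldsymbol e_j$ coordinate by coordinate and use a union bound, picking up a factor $\sqrt d$, whereas the paper averages over the full cube $[-H,H]^d$ and bounds the resulting product of $\mathrm{sinc}$ factors in one step.
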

\begin{proof}
Let $\|\boldsymbol h\|\le H$. 
By 
$$
f(\boldsymbol x + \boldsymbol h)
- 
f(\boldsymbol x)
=
\sum_{\boldsymbol \xi\in {\bf Z}^d}
\widehat f(\boldsymbol \xi) 
\exp(2\pi \iu\boldsymbol \xi \boldsymbol x)
\bigl(
\exp(2\pi \iu\boldsymbol \xi \boldsymbol h)
-1\bigr),
$$
we have
\begin{align*}
\bigl(\omega^{(2)}(H,f)\bigr)^2
&\ge 
\|f({}\cdot{}+\boldsymbol h) 
-
f({}\cdot{})
\|_{L^2[0,1]^d}^2
\\&
=
2\sum_{\boldsymbol \xi\in {\bf Z}^d}
|\widehat f(\boldsymbol \xi) |^2
\bigl(1- {\rm Re}
\exp(2\pi \iu\boldsymbol \xi \boldsymbol h)
\bigr).
\end{align*}
By integrating both sides over $[\,-H, H\,]^d$ by $\boldsymbol h$ and dividing by $(2H)^d$, we have
$$
\bigl(\omega^{(2)}(H,f)\bigr)^2
\ge
2\sum_{\boldsymbol \xi\in {\bf Z}^d}
|\widehat f(\boldsymbol \xi) |^2
\biggl(1-\prod_{i=1}^d\frac{\sin(2\pi \xi_i H)}{2\pi \xi_i H}\biggr)
.
$$
If $\|\boldsymbol \xi\|\ge 1/H$, then there exists a $j_0$ such that $|\xi_{j_0}|H\ge 1$. 
Since we have
$$
\biggl|\prod_{i=1}^d\frac{\sin(2\pi \xi_i H)}{2\pi \xi_i H}\biggr|
\le \frac1{2\pi |\xi_{j_0}|H}\le \frac1{2\pi}
,$$
we have
$$
\bigl(\omega^{(2)}(H,f)\bigr)^2
\ge
(2-1/\pi)\sum_{\boldsymbol \xi\in {\bf Z}^d: \|\boldsymbol \xi\|\ge 1/H}
|\widehat f(\boldsymbol \xi) |^2
.
$$
Hence we have $\omega^{(2)}(2^{-m},f)\ge \|f-f_{2^m}\|_{L^2[0,1]^d}$. 
Since the condition (\ref{Eq:l2dini}) is equivalent to 
$\sum_m \omega^{(2)}(2^{-m},f)< \infty$, it implies the condition (\ref{Eq:dini}). 
\end{proof}

\section{Fourth moment estimates}

In this section, we assume the condition (\ref{Eq:expx}), 
Put
$$
\rho_0(x) = \Bigl(\frac{\sin x}x\Bigr) ^2 
\quad\hbox{and}
\quad
\rho_1(x) = \rho_0\bigl(x/\sqrt d\,\bigr) + \rho_0\bigl(x/\sqrt{2d}\,\bigr)
.$$
By
$$
\widehat \rho_0(\xi)=
\int_{\bf R} \rho_0(x) e^{2\pi \iu \xi x}\,dx
= 
\pi (1-\pi |\xi|)\vee0, 
$$
we obtain
$\widehat \rho_1(\xi) =0$ if $|\xi | \ge 1/(\pi\sqrt d\,)$ and 
$\rho_1(x) > 0$ for $x\in {\bf R}$. 
By putting
$$\rho(\boldsymbol x) = \prod_{s=1}^d \rho_1(x_s)
> 0 
\quad 
(\boldsymbol x= (x_1, \dots, x_d)\in {\bf R}^d),
$$
by $\sqrt d\|\boldsymbol \xi\|\ge\|\boldsymbol \xi\|_2$,  we obtain
$$
\widehat 
\rho(\boldsymbol \xi) = 0
\quad
\hbox{for}
\quad 
\boldsymbol \xi \in \widehat {\mathbf R}^d
\quad
\hbox{with}
\quad
\|\boldsymbol \xi  \|\ge 1/(\pi\sqrt d\,)
\quad\hbox{or}\quad
\|\boldsymbol \xi  \|_2\ge1 /\pi
.$$

\begin{lemma}
Assume that $A$ satisfies (\ref{Eq:expx}), 
For any bounded measurable set $\Gamma\subset \mathbf R^d$, 
and for any trigonometric polynomial $f_a$  satisfying (\ref{Eq:periodmean}), 
there exists a constant $D_{5, \Gamma, A,f_a}$ such that 
\begin{equation}
\int_\Gamma \max_{n\in \Delta}
\biggl(\sum_{k\in \Delta: k\le n} f_a(A^k \boldsymbol x)\biggr)^4 \,d\boldsymbol x
\le D_{5, \Gamma, A, f_a}({}^\# \Delta)^2
\label{Eq:maxfourth}
\end{equation}
for any finite set $\Delta \subset \mathbf N$. 
\end{lemma}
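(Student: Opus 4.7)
Since $\Gamma$ is bounded and $\rho$ is strictly positive and continuous, the infimum $c_\Gamma:=\inf_{\boldsymbol x\in\overline\Gamma}\rho(\boldsymbol x)$ is positive, so $\mathbf 1_\Gamma\le c_\Gamma^{-1}\rho$, and it suffices to prove the same bound with $d\boldsymbol x$ replaced by $\rho(\boldsymbol x)\,d\boldsymbol x$ and integration over $\mathbf R^d$. This substitution gains the crucial support property $\widehat\rho(\boldsymbol\xi)=0$ once $\|\boldsymbol\xi\|\ge 1/(\pi\sqrt d\,)$. Writing $\Delta=\{k_1<\cdots<k_N\}$ with $N={}^\#\Delta$ and $S_j(\boldsymbol x)=\sum_{i\le j}f_a(A^{k_i}\boldsymbol x)$, I plan first to establish the increment estimate
$$
\int_{\mathbf R^d}\bigl(S_n(\boldsymbol x)-S_m(\boldsymbol x)\bigr)^4\rho(\boldsymbol x)\,d\boldsymbol x\le D_a(n-m)^2\quad(0\le m\le n\le N),
$$
and then to invoke M\'oricz's fourth-moment maximal inequality: since the right-hand side is additive (hence super-additive) in $(m,n)$, this upgrades to $\int_{\mathbf R^d}\max_j S_j^4\,\rho\,d\boldsymbol x\le D_a' N^2$, \emph{without} the logarithmic loss that a na\"ive Rademacher--Menshov estimate would produce.

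\textbf{Fourier expansion and pairing.} Expanding $f_a$ as a finite Fourier sum, the integrand in the M\'oricz increment becomes
$$
\sum_{\vec k\in(\Delta\cap(k_m,k_n])^4}\,\sum_{\vec\xi\in R_a^4}\Bigl(\prod_{j=1}^4\widehat f(\xi_j)\Bigr)\,\widehat\rho\Bigl(\sum_{j=1}^4\xi_j A^{k_j}\Bigr),
$$
and only 4-tuples producing $\bigl\|\sum_j\xi_j A^{k_j}\bigr\|<1/(\pi\sqrt d\,)$ contribute. I pass to the $B$-coordinate frequencies $\widetilde\eta=\sum_j\widetilde\xi_j B^{k_j}$ via $A=QBQ^{-1}$ and analyse them block by block using (\ref{Eq:upperb}) and (\ref{Eq:lowerb}). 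Ordering $k_1\ge k_2\ge k_3\ge k_4$, the leading nonzero block of $\widetilde\eta$ is governed by the term with the largest index: exactly as in the argument establishing $D_{4,a}>0$ in the excerpt, once $k_1-k_2$ exceeds a bounded threshold $K(a)$ the leading block of $\widetilde\eta$ has $m_{i(\cdot)}$-size of order $\Lambda^{k_1}$ and exceeds the cutoff. Hence survivors must form two close pairs of indices with the corresponding $\xi$'s in cancelling positions; the count of such 4-tuples is $O((n-m)^2)$, each term bounded by $\|\widehat f\|_\infty^4\|\widehat\rho\|_\infty$, which gives the increment bound.

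\textbf{Main obstacle.} The technical heart of the proof is extending the two-term block-by-block cancellation analysis underlying $D_{4,a}>0$ to four-term sums: different Jordan blocks $B_h$ expand at different rates $|\lambda_h|$, so there is no single scale at which cancellation must occur, and one must rule out cross-block conspiracies in which several smaller $\xi_j A^{k_j}$'s combine to cancel the dominant one, while also bookkeeping which pair of indices binds to which pair of $\xi$-cancellations. Once that is settled, the final constant $D_{5,\Gamma,A,f_a}$ absorbs $c_\Gamma^{-1}$, the M\'oricz constant, $\|\widehat f\|_\infty^4\|\widehat\rho\|_\infty$, $\#R_a$ and $K(a)$.
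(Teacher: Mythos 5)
Your outer structure (dominate $\mathbf 1_\Gamma$ by a multiple of $\rho$, prove a fourth-moment bound for increments, upgrade to the maximal version by M\'oricz/Erd\H os--Ste\v ckin) matches the paper's. The essential divergence, and the gap, is in how the fourth-moment bound is obtained. You expand $(S_n-S_m)^4$ into a quadruple sum over $\vec\xi\in R_a^4$ with four \emph{possibly distinct} frequencies and try to count surviving tuples directly. You correctly identify the resulting difficulty as the ``main obstacle'' --- but you do not resolve it, and as stated your route has a concrete problem: you claim that once $k_1-k_2$ exceeds a \emph{bounded} threshold $K(a)$ the leading block of $\sum_j\widetilde{\boldsymbol\xi}_jB^{k_j}$ exceeds the cutoff. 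With four different $\boldsymbol\xi_j$'s, the leading blocks $h(\boldsymbol\xi_j)$ and leading coordinates can all differ, and the off-diagonal entries of $J_\gamma(\lambda)^k$ carry polynomial factors of order $k^\gamma$ (cf.\ (\ref{Eq:upperb})); the contribution of $\widetilde{\boldsymbol\xi}_2B_h^{k_2}$ to the coordinate where $\widetilde{\boldsymbol\xi}_1B_h^{k_1}$ is large is only $O(k_2^{d}|\lambda_h|^{k_2})$, so domination requires $|\lambda_h|^{k_1-k_2}\gg k_1^{d}$, i.e.\ a gap growing like $\log k_1$, not a bounded $K(a)$. Ruling out cross-block conspiracies on top of that is exactly the part you leave open, so the increment estimate --- the heart of the lemma --- is not proved.

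The paper sidesteps all of this with two moves you did not make. First, it writes $f_a=\sum_{\boldsymbol\xi\in R_a}|\widehat f(\boldsymbol\xi)|\cos(2\pi\boldsymbol\xi A^k\boldsymbol x+\gamma_{\boldsymbol\xi})$ and uses the triangle (Minkowski) inequality in $L^4$ over the finitely many $\boldsymbol\xi$, so that it suffices to treat, for each \emph{single fixed} $\boldsymbol\xi$, the system $\varphi_k=\cos(2\pi\boldsymbol\xi A^k\cdot+\gamma_{\boldsymbol\xi})$. Then all four frequencies in a product share the same leading block and leading coordinate, where the Jordan block acts exactly as multiplication by $\lambda_h^k$ (no polynomial factors), and after passing to the arithmetic progressions $k p-r$ with $p$ fixed large, the four-term sum $|\lambda_h^{k_4p-r}\pm\cdots\pm\lambda_h^{k_1p-r}|$ is dominated by its largest term. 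Second, instead of counting pairings by hand, the paper verifies the multiplicative-system condition (\ref{Eq:4MS}) (the fourfold product integral vanishes for four distinct indices, because $\widehat\rho$ of the resulting large frequency is $0$) and invokes the Koml\'os--R\'ev\'esz lemma to get (\ref{Eq:fourth}); the combinatorics of repeated indices is absorbed there. If you adopt the single-$\boldsymbol\xi$ reduction, your counting scheme becomes unnecessary and the obstacle you flagged disappears; without it, your proof is incomplete.
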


\begin{proof}
Since we can take a constant $D_{6, \Gamma}< \infty$ such that 
${\boldsymbol 1}_{\Gamma}(\boldsymbol x) \le D_{6, \Gamma}\, \rho(\boldsymbol x)$, 
it is sufficient to prove 
\begin{equation*}
\int_{\mathbf R^d} \max_{n \in \Delta}
\biggl(\sum_{k\in \Delta: k\le n} f_a(A^k \boldsymbol x)\biggr)^4 \rho(\boldsymbol x)\,d\boldsymbol x
\le D_{7, A, f_a}C (^\#\Delta)^2.
\end{equation*}

Koml\'os-R\'ev\'esz \cite{komlosrevesz}
proved the following: 
Suppose that $X$ is a non-empty  set,  $m$ is a $\sigma$-finite measure on $X$, 
and $\{\varphi_i\}$ is a sequence of real valued measurable functions
satisfying 
\begin{equation}
\int_X \varphi_{k_1}^4 \,dm
\le M, 
\quad
\int_X\varphi_{k_1}\varphi_{k_2}\varphi_{k_3}\varphi_{k_4}\,dm=0
\quad 
(k_1 < k_2< k_3 < k_4)
,
\label{Eq:4MS}
\end{equation}
for some $M<\infty$, 
then there exists an absolute constant $D_8$ such that 
\begin{equation}
\int_X\biggl(\sum_{k=1}^N c_k\varphi_k \biggr)^4\,dm
\le D_8 \biggl(\sum_{k=1}^N c_k^2\biggr)^2
\quad(N\in {\bf N})
.
\label{Eq:fourth}
\end{equation}
Noting this estimate and by applying 
Erd\H os-Ste\v ckin Theorem (See \cite{moricz}), we can derive
$$
\int_X \max_{n\le N}\biggl(\sum_{k=1}^n c_k\varphi_k \biggr)^4\,dm
\le D_9 \biggl(\sum_{k=1}^N c_k^2\biggr)^2
\quad(N\in {\bf N})
,$$
where $D_9$ is an absolute constant. 
Note that any subsequence of $\{\varphi_k\}$ satisfying (\ref{Eq:4MS}) 
also satisfies (\ref{Eq:4MS})
and our version 
$$
\int_X \max_{n\in \Delta}\biggl(\sum_{k\in \Delta: k\le n} c_k\varphi_k \biggr)^4\,dm
\le D_9 \biggl(\sum_{k\in \Delta} c_k^2\biggr)^2
$$
follows. 

We use the  expression 
$$
f_a(\boldsymbol x) 
=
\sum_{\boldsymbol\xi\in R_a}
|\widehat f(\boldsymbol \xi)|
\cos (2\pi \boldsymbol \xi \boldsymbol x + \gamma_{\boldsymbol \xi})
$$
to have
$$
\sum_{k=1}^N f_a(A^k\boldsymbol x) 
=
\sum_{\boldsymbol\xi\in R_a}
|\widehat f(\boldsymbol \xi)|
\sum_{k=1}^N
\cos (2\pi \boldsymbol \xi A^k\boldsymbol x + \gamma_{\boldsymbol \xi})
.$$
Because of 
$\sum_{\boldsymbol\xi\in R_a}|\widehat f(\boldsymbol \xi)|< \infty$, 
if we have (\ref{Eq:fourth}) for 
$\varphi_k({}\cdot{}) = \cos (2\pi \boldsymbol \xi A^k {}\cdot{} + \gamma_{\boldsymbol \xi})$, 
then we have (\ref{Eq:fourth}) for 
$\varphi_k({}\cdot{}) = f_a(A^k {}\cdot{})$. 
Hence  it is enough to prove that 
the sequence 
$\{\cos (2\pi \boldsymbol \xi A^k\boldsymbol x + \gamma_{\boldsymbol \xi})\}_{k\in \mathbf N}$
satisfies (\ref{Eq:4MS}) under the measure $\rho(\boldsymbol x)\,d\boldsymbol x$.

Take $p\in \mathbf N$ large enough to satisfy
$$
\Lambda ^{3p} \ge (3 D_{3,a}^{-1} D_{1,Q} / \pi)\vee 3^3.
$$
For  $r=0$, $1$, \dots, $p-1$, 
we show that 
$\{\cos (2\pi \boldsymbol \xi A^{kp-r}\boldsymbol x + \gamma_{\boldsymbol \xi})\}_{k\in \mathbf N}$
satisfies (\ref{Eq:4MS}) under the measure $\rho(\boldsymbol x)\,d\boldsymbol x$. 
Then we can see that it satisfies (\ref{Eq:fourth}), 
and then by using Minkowski's inequality
we see 
that 
$\{\cos (2\pi \boldsymbol \xi A^{k}\boldsymbol x + \gamma_{\boldsymbol \xi})\}_{k\in \mathbf N}$
itself satisfies (\ref{Eq:fourth}). 

We first note that for $h\le \beta$ and $k_1< k_2< k_3 < k_4$, 
\begin{align*}
&|\lambda_h^{k_4p-r} \pm \lambda_h^{k_3p-r} \pm \lambda_h^{k_2p-r} \pm \lambda_h^{k_1p-r}|
\\&\ge 
|\lambda_h^{k_4p-r}| - |\lambda_h^{(k_4-1)p-r}| - |\lambda_h^{(k_4-2)p-r} |
- \cdots 
\\&
\ge
\Bigl(1 - \frac1{|\lambda_h^p|-1}\Bigr)|\lambda_h^{k_4p-r}|
\ge
\Bigl(1 - \frac1{\Lambda^p-1}\Bigr)\Lambda^{k_4p-r}
\\&\ge 
\frac12 \Lambda^{k_4p-r}
\ge \frac12 \Lambda^{3p}.
\end{align*}

By putting $\varsigma_4=1$, we have
\begin{align*}
&
\prod_{s =1}^4
\cos(2\pi \boldsymbol \xi A^{k_s  p-r} \boldsymbol x+\gamma_{\boldsymbol \xi})
\\
&=
8^{-1}
\sum_{\varsigma_1, \varsigma_2, \varsigma_3= \pm1}
\exp\biggl(2\pi \iu 
\sum_{s =1}^4 \varsigma_s (\boldsymbol\xi Q  B^{k_s  p-r}
Q^{-1}\boldsymbol x +\gamma_{\boldsymbol \xi})
\biggr)
\\
&
=
8^{-1}
\sum_{\varsigma_1, \varsigma_2, \varsigma_3= \pm1}
\exp(2\pi \iu (\varsigma_1 +\cdots+ \varsigma_4) \gamma_{\boldsymbol \xi})
\\
&\qquad\quad
\times
\exp\biggl(2\pi \iu \biggl(
\widetilde{\boldsymbol\xi}_1 \sum_{s =1}^4 \varsigma_s  B_1^{k_s  p-r}, 
\dots, 
\widetilde{\boldsymbol\xi}_\beta \sum_{s =1}^4 \varsigma_s  B_\beta^{k_s  p-r}
\biggr)
Q^{-1}\boldsymbol x\biggr).
\end{align*}

Suppose that $h(\boldsymbol \xi)\le \alpha$. 
By denoting $h(\boldsymbol \xi)$ simply by $h$, denoting 
$\widetilde{\boldsymbol\xi}_h$ by\break $(\xi_1, \dots, \xi_{d_h})$, and 
by taking a $\delta$ such that $\xi_1= \dots=\xi_{\delta-1}=0\not= \xi_{\delta}$, 
by $|\xi_{\delta}|=m_1(\widetilde {\boldsymbol \xi}_h)\ge D_{3, a}$, 
we have
\begin{align*}
&\Biggl|
\biggl(
\widetilde{\boldsymbol\xi}_h
\sum_{s =1}^4 \varsigma_s  B_h^{k_s  p-r}
\biggl)_{\delta}
\Biggr|
=
\biggl|\xi_\delta\sum_{s =1}^4 \varsigma_s  \lambda_h^{k_s  p-r} 
\biggr|
\ge  \frac{D_{3,a}\Lambda^{3p}}{2}
.
\end{align*}
Hence we have
\begin{align*}
&\biggl\|
\boldsymbol\xi
Q
\sum_{s =1}^4
\varsigma_s  B^{k_s  p-r}
Q^{-1}
\biggr\|
\ge  \frac{D_{1,Q}^{-1}D_{3,a}\Lambda^{3p}}{2}
\ge \frac{1}{\pi}
,
\end{align*}
which implies
\begin{equation}
\int_{\mathbf R^d}
\prod_{s =1}^4
\cos(2\pi \boldsymbol \xi A^{k_s  p-r} \boldsymbol x+\gamma_{\boldsymbol \xi})
\rho(\boldsymbol x)\,d\boldsymbol x
=0.
\label{Eq:cosMS}
\end{equation}

Suppose that $h(\boldsymbol \xi)> \alpha$. 
By denoting $h(\boldsymbol \xi)$ simply by $h$, denoting 
$\widetilde{\boldsymbol\xi}_h$ by\break 
$(\boldsymbol\eta_1, \dots, \boldsymbol\eta_{d_h/2})$, 
and by taking a $\delta$ such that $\boldsymbol\eta_1= \dots=\boldsymbol\eta_{\delta-1}=\boldsymbol 0\not= 
\boldsymbol\eta_{\delta}$, we have
\begin{align*}
&\biggl\|
\biggl(
\widetilde{\boldsymbol\xi}_h
\sum_{s  =1}^4 \varsigma_s  B_h^{k_s  p-r}
\biggr) _{2\delta-1, 2\delta}
\biggr\|_2
\\&=
\biggl\|\boldsymbol\eta_\delta\sum_{s =1}^4 
\varsigma_s  \lambda_h^{k_s  p-r}Z_{\theta (k_s  p-r)}
\biggr\| _2
\\&
\ge 
\|\boldsymbol\eta_\delta\|_2
\bigl(|\lambda_h|^{k_4p-r} - |\lambda_h|^{k_3p-r} 
-|\lambda_h|^{k_2p-r} -|\lambda_h|^{k_1p-r}\bigr)
\\&
\ge   \frac{m_2(\widetilde {\boldsymbol \xi}_h)\Lambda^{3p}}{2}
\ge \frac{D_{3,a}\Lambda^{3p}}{2}
.
\end{align*}
Hence in the same way as before, we can verify (\ref{Eq:cosMS}). 
\end{proof}

\section{Martingale approximation}

Take $L_\delta^{(h)}\in \mathbf R$ 
($\delta=1$, \dots, $d_h$, $h=1$, \dots $\beta$)
and $L>0$ arbitrarily 
and put
\begin{equation}\label{Eq:Omega}
\Omega=
\biggl\{
\sum_{h=1}^\beta
\sum_{\delta=1}^{d_h}
t^{(h)}_\delta \boldsymbol q^{(h)}_\delta
\biggm| 
L^{(h)}_{\delta} \le t^{(h)}_{\delta} < L^{(h)}_{\delta}+L
\biggr\}
.
\end{equation}
Let $\mathcal F$ be the Borel $\sigma$-field on $\Omega$ and 
put
$$
P_\Omega(B) = \frac{\Leb (B)}{ \Leb(\Omega)}
\quad (B\in \mathcal F)
.$$
We consider the sequence $\{f_a(A^k{}\cdot{})\}$  on 
the probability space $(\Omega, \mathcal F, P_\Omega)$. 
We state the almost sure invariance principle 
for the sequence. 
We denote the Lebesgue measure on $[\,0,1)$ by $\leb$. 

\begin{proposition}\label{prop:asip}
Let $A$ be a $d\times d$ real matrix satisfying (\ref{Eq:expx}), 
and let $f_a$ be a trigonometric polynomial  on $\mathbf R^d$ 
 satisfying (\ref{Eq:periodmean}). 
By taking the product probability space $(\Omega\times [\,0,1), \mathcal F\otimes 
\mathcal B([\,0,1)), P_\Omega\times\leb)$ and regard the sequence $\{f_a(A^k{}\cdot{})\}$ defined
on this space. If $\sigma^2(f_a)> 0$, 
then we can define a sequence $\{Z_i\}$ of standard normal i.i.d.  such that
\begin{equation}
\sum_{k=1}^N f_a(A^k {}\cdot{})
=
\sum_{i\le N\sigma^2(f_a)} Z_i 
+o(N^{62/125})
\quad
\hbox{a.s.}
\label{Eq:asip}
\end{equation}
\end{proposition}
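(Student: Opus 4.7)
The plan is to prove the ASIP by approximating the partial sums $S_N=\sum_{k=1}^N f_a(A^k\cdot)$ by a martingale on $\Omega$ and then applying a strong-approximation theorem for martingale-difference sequences. The auxiliary factor $[\,0,1)$ in the product probability space is exactly the randomization room required by a Skorokhod-type embedding.

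First I would build an increasing filtration $\{\mathcal G_k\}_{k\in\mathbf N}$ on $\Omega$ adapted to the Jordan geometry. Since $\Omega$ is the parallelepiped spanned by $\{\boldsymbol q_\delta^{(h)}\}$, I would partition it into sub-parallelepipeds of side-length of order $\Lambda^{-k}$ along each basis direction and let $\mathcal G_k$ be the $\sigma$-field generated by these cells. By (\ref{Eq:lowerb}), $m_{i(h(\boldsymbol\xi))}(\widetilde{\boldsymbol\xi}_{h(\boldsymbol\xi)}B_{h(\boldsymbol\xi)}^k)\ge D_{3,a}\Lambda^k$ for every $\boldsymbol\xi\in R_a$, so the character $\exp(2\pi\iu\boldsymbol\xi A^k x)$ oscillates, in the Jordan coordinates, on scale at most a constant multiple of $\Lambda^{-k}$. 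Combined with the upper bound (\ref{Eq:upperb}), this yields the geometric decay
\[
\|E[f_a(A^k\cdot)\mid \mathcal G_{k-r}]\|_{L^2(\Omega)}\le C\Lambda^{-r/2},
\]
while $f_a(A^k\cdot)$ is itself approximable in $L^2(\Omega)$ by a $\mathcal G_{k+r}$-measurable function at the same rate.

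Using this decay I would perform a Gordin-style martingale decomposition by setting
\[
d_k=\sum_{j\ge 0}\bigl(E[f_a(A^{k+j}\cdot)\mid \mathcal G_k]-E[f_a(A^{k+j}\cdot)\mid \mathcal G_{k-1}]\bigr),
\]
so that $(d_k)$ is a $(\mathcal G_k)$-martingale-difference sequence and
\[
S_N=\sum_{k=1}^N d_k+R_N,\qquad \|R_N\|_{L^2(\Omega)}=O(1).
\]
The fourth-moment estimate (\ref{Eq:maxfourth}) transfers to $\sup_k\|d_k\|_{L^4}<\infty$, and the conditional variances $E[d_k^2\mid\mathcal G_{k-1}]$ average to $\sigma^2(f_a)$ in view of (\ref{Eq:limvar}). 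A Strassen-type strong-approximation theorem for martingales (with the quantitative refinements available under a fourth-moment hypothesis) on the product space $\Omega\times[\,0,1)$ then produces an i.i.d.\ standard normal sequence $\{Z_i\}$ with $\sum_{k=1}^N d_k=\sum_{i\le N\sigma^2(f_a)}Z_i+o(N^{62/125})$ a.s. The specific exponent $62/125=\tfrac12-\tfrac1{250}$ arises from optimizing the Skorokhod embedding parameters against the fourth-moment maximal inequality supplied by (\ref{Eq:maxfourth}).

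The main obstacle will be establishing the uniform geometric decay of $E[f_a(A^k\cdot)\mid \mathcal G_{k-r}]$ in the presence of complex Jordan blocks and nontrivial nilpotent parts. Complex blocks $C_{d_h/2}(\lambda_h)$ produce rotations $Z_{k\theta}$, which may misalign the character $\exp(2\pi\iu\boldsymbol\xi A^k x)$ with a fixed $\mathcal G_k$-partition in the $(\boldsymbol q_{2\delta-1}^{(h)},\boldsymbol q_{2\delta}^{(h)})$-plane, while the polynomial factor $k^d$ in (\ref{Eq:upperb}) mildly degrades the smoothing step. Resolving this requires using the componentwise lower bound (\ref{Eq:lowerb}) together with either a rotation of the cell grid by $\theta$ or a choice of cell diameter with enough slack to absorb the $k^d$ factor. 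Once these adjustments are in place, the geometric gap between (\ref{Eq:upperb}) and (\ref{Eq:lowerb}) is wide enough to close the martingale argument and deliver the claimed rate (\ref{Eq:asip}).
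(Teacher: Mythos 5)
Your overall architecture --- a filtration adapted to the real Jordan coordinates, a martingale approximation of the partial sums, fourth moments via (\ref{Eq:maxfourth}), and a Strassen/Monrad--Philipp type strong approximation that uses the auxiliary factor $[\,0,1)$ as randomization --- is the same as the paper's, and your account of where the exponent $62/125=\tfrac12-\tfrac1{250}$ comes from is essentially right. But the central technical step fails as stated. You refine the partition isotropically, at rate $\Lambda^{-k}$ in every basis direction with $\Lambda=\min_h|\lambda_h|$. When the moduli $|\lambda_h|$ are not all equal this cannot deliver both halves of your claim. The decay $\|E[f_a(A^k\cdot)\mid\mathcal G_{k-r}]\|_{L^2}\le C\Lambda^{-r/2}$ is fine with this choice, because by (\ref{Eq:lowerb}) every character oscillates at frequency at least $D_{3,a}\Lambda^k$. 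The problem is the other half: to approximate $f_a(A^k\cdot)$ by a $\mathcal G_{k+r}$-measurable function you need the cells of $\mathcal G_{k+r}$ to have diameter $\ll|\lambda_h|^{-k}$ in the direction of $W_h$, since by (\ref{Eq:upperb}) the map $A^k$ stretches $W_h$ by a factor of order $|\lambda_h|^k k^d$; with cells of side $\Lambda^{-(k+r)}$ the image of a cell under $A^k$ has diameter of order $(|\lambda_h|/\Lambda)^k\Lambda^{-r}\to\infty$ in any direction with $|\lambda_h|>\Lambda$, so $f_a(A^k\cdot)$ is nowhere near $\mathcal G_{k+r}$-measurable and your remainder $R_N$ is not $O(1)$ in $L^2$. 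Enlarging the grid to side $(\max_h|\lambda_h|)^{-k}$ repairs this but destroys the decay half for frequencies with $|\lambda_{h(\boldsymbol\xi)}|<\max_h|\lambda_h|$, whose oscillation scale is eventually much coarser than the cells of $\mathcal G_{k-r}$. This is exactly the phenomenon the author illustrates in the introduction with $\mathrm{diag}(c_1,c_2)^n$, $c_1\neq c_2$; it is not the complex-block/nilpotent difficulty you flag at the end, and it already occurs for diagonalizable real $A$.

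The paper's resolution, which your proposal is missing, is twofold. First, the cells are anisotropic: the side in the $h$-th Jordan direction is $L2^{-\mu_h(i)}$ with $\mu_h(i)=\lfloor\log_2(i^{4+5d/3}|\lambda_h|^{i^+})\rfloor$, i.e.\ refined at rate $|\lambda_h|^{-k}$ in direction $h$, with a polynomial safety factor absorbing the $k^d$ from (\ref{Eq:upperb}), so that (\ref{Eq:lowerb}) and (\ref{Eq:upperb}) are matched direction by direction. Second, since the resulting gain per step is only polynomial, time is organized into growing blocks $\Delta_i$ of length $\lfloor i^{2/3}\rfloor$ separated by gaps $\Delta_i'$ of length $O(\log i)$, the filtration is indexed by blocks, and one proves $E(T_i\mid\mathcal F_{i-1})=O(i^{-4})$ and $E(T_i\mid\mathcal F_i)-T_i=O(i^{-3})$ for the block sums $T_i$; this replaces your Gordin series, whose convergence you have not secured. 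The remaining steps --- conditional variance control via the combinatorics of $\boldsymbol\xi A^k+\boldsymbol\xi'A^{k'}=\boldsymbol 0$, the per-block fourth-moment bound $O(i^{4/3})$ from (\ref{Eq:maxfourth}), Monrad--Philipp with $\psi(x)=x^{4/5}$, and a Cs\"org\H o--R\'ev\'esz fluctuation estimate to pass from block endpoints to arbitrary $N$ --- then proceed much as you envisage. (Also, your appeal to (\ref{Eq:limvar}) for the conditional variances is circular as written; for trigonometric polynomials the identification of the limit with $\sigma^2(f_a)$ must be, and in the paper is, proved directly.)
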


From this proposition, we can derive the central limit theorem.

\begin{corollary}\label{cor:cltlil}
Let $A$ be a $d\times d$ real matrix satisfying (\ref{Eq:expx}), 
and let $f_a$ be a trigonometric polynomial  on $\mathbf R^d$ 
 satisfying (\ref{Eq:periodmean}). 
Then for any probability measure $P$ on $\mathbf R^d$ which is absolutely continuous 
with respect to Lebesgue measure, 
on the probability space $(\mathbf R^d, \mathcal B^d, P)$ we have the convergence in law
\begin{equation}
\frac1{\sqrt N} 
\sum_{k=1}^N 
f_a(A^k {}\cdot{})
\buildrel {\mathcal D}\over \longrightarrow
N(0, \sigma^2(f_a))
\quad(N\to\infty).
\label{Eq:clt}
\end{equation}
\end{corollary}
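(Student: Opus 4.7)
The plan is to derive the corollary from Proposition \ref{prop:asip} by transferring its conclusion in three successive steps: first to convergence in law on the parallelotope $(\Omega,P_\Omega)$, then to normalised Lebesgue measure on an arbitrary bounded Borel set, and finally, via an $L^1$ approximation of the density, to any absolutely continuous probability $P$. Throughout write $S_N=\sum_{k=1}^N f_a(A^k{}\cdot{})$.

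In the nondegenerate case $\sigma^2(f_a)>0$ I would read the CLT on $(\Omega,P_\Omega)$ off (\ref{Eq:asip}) directly: since $62/125<1/2$, the error term is $o(\sqrt N)$ almost surely, and the classical CLT for the partial sums of the i.i.d.\ standard normals $\{Z_i\}$ gives $N^{-1/2}\sum_{i\le N\sigma^2(f_a)}Z_i\Rightarrow N(0,\sigma^2(f_a))$. Because the left-hand side of (\ref{Eq:asip}) depends only on the $\Omega$-coordinate of $\Omega\times[\,0,1)$, the resulting convergence in law persists under $P_\Omega$ alone. In the degenerate case $\sigma^2(f_a)=0$ I would argue separately: the fourth moment lemma of the preceding section, together with the absolutely convergent formula (\ref{Eq:lv}) evaluated termwise, yields $\|N^{-1/2}S_N\|_{L^2(P_\Omega)}\to 0$, hence convergence in law to $\delta_0=N(0,0)$.

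The parameters $L^{(h)}_\delta$ and $L$ entering (\ref{Eq:Omega}) are free, so any prescribed bounded Borel $\Gamma\subset\mathbf R^d$ may be placed inside some $\Omega$. Since the ASIP supplies an actual almost sure coupling rather than merely a distributional statement, that same coupling restricts to $\Gamma$ and yields convergence in law under $\mu_\Gamma:=\Leb({}\cdot{}\cap\Gamma)/\Leb(\Gamma)$. For a general absolutely continuous $P$ with density $p$, I would then fix $\varepsilon>0$, choose a simple function $p_\varepsilon=\sum_{i=1}^n c_i\mathbf 1_{\Gamma_i}$ with bounded Borel $\Gamma_i$'s satisfying $\|p-p_\varepsilon\|_{L^1}<\varepsilon$, and use
\[
\Bigl|P\{N^{-1/2}S_N\le t\}-\sum_{i=1}^n c_i\Leb(\Gamma_i)\,\mu_{\Gamma_i}\{N^{-1/2}S_N\le t\}\Bigr|\le\varepsilon
\]
together with the previous step to obtain (\ref{Eq:clt}) in the limit $N\to\infty$ followed by $\varepsilon\to 0$.

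All the analytic depth is concentrated in Proposition \ref{prop:asip}; the present argument is a routine transfer of that result. The only mildly delicate point is the second step, which relies on the ASIP producing a genuine pointwise coupling rather than only a distributional statement, so that it survives restriction to arbitrary bounded Borel subsets of $\Omega$.
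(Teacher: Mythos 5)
Your first step is the paper's own: from (\ref{Eq:asip}), since $62/125<1/2$, the error is $o(\sqrt N)$ a.s., so $X_N^a$ converges in law to $N(0,\sigma^2(f_a))$ under $P_\Omega\times\leb$, hence under $P_\Omega$ because $X_N^a$ does not involve the auxiliary coordinate. Your treatment of the degenerate case is also essentially the paper's, except that the relevant input is the second-moment computation of the Variance control section (the Riemann--Lebesgue argument giving $N^{-1}\int_\Gamma(\sum_{k\le N} f_a(A^k\boldsymbol x))^2\,d\boldsymbol x\to\Leb(\Gamma)\sigma^2(f_a)=0$), not the fourth-moment lemma; this is cosmetic.

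The genuine gap is your second step. From the almost sure coupling $X_N^a-G_{N\sigma^2(f_a)}/\sqrt N\to0$ on $\Omega\times[\,0,1)$ you may indeed restrict the \emph{almost sure} half of the argument to $\Gamma\times[\,0,1)$ for any positive-measure $\Gamma\subset\Omega$. What does not restrict is the \emph{distributional} half: the $Z_i$ supplied by Theorem \ref{thm:monrad} are i.i.d.\ standard normal under $P_\Omega\times\leb$, but under the conditional (renormalised) measure on $\Gamma\times[\,0,1)$ they need not be independent, Gaussian, or even centred --- conditioning on a sub-event of the space on which the coupling was built can distort their joint law. Hence you cannot conclude that $G_{N\sigma^2(f_a)}/\sqrt N$ converges in law to $N(0,\sigma^2(f_a))$ under $\mu_\Gamma\times\leb$, and the CLT under $\mu_\Gamma$ for an arbitrary bounded Borel $\Gamma$ does not follow by ``restricting the coupling.'' The paper never restricts a single coupling: it runs the whole filtration/martingale/ASIP construction afresh for \emph{every} parallelotope $\Omega$ of the form (\ref{Eq:Omega}) (arbitrary $L>0$ and $L^{(h)}_\delta$), so the CLT is known under $P_\Omega$ for each member of the family $\mathcal H$, whose linear span is dense in $L^1({\bf R}^d)$; it then uses the closure properties (\ref{Eq:linear}) and (\ref{Eq:closed}) to pass to continuous compactly supported functions and finally to arbitrary densities. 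Your final approximation step survives if you take the $\Gamma_i$ to be parallelotopes of the form (\ref{Eq:Omega}) rather than arbitrary bounded Borel sets; with that modification your argument collapses onto the paper's.
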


\begin{proof}[Proof of Proposition \ref{prop:asip}]

We divide the increasing sequence $\bf N$ of positive integers 
into consecutive blocks
$$
{\bf N}= \Delta_1' \cup \Delta_1 \cup \Delta_2'\cup \Delta_2\cup\dots
$$
where 
$$^\# \Delta_i = \lfloor i^{2/3}\rfloor
\quad\hbox{and}\quad 
{}^\# \Delta_i'= \lfloor 1 + (9+5d/3)\log _{\Lambda} i\rfloor.
$$
Put $i^- = \min \Delta_i$ and $i^+ = \max \Delta _i$. 
Clearly we have 
$$i^+ \le (2+(9+5d/3)/\log \Lambda )i^{5/3}
\quad\hbox{and}\quad
i^- -(i-1)^+ = {}^\#\Delta_i'
.$$

Put 
$$
\mu_h (i) = \lfloor \log _2 (i^{4+5d/3} |\lambda_h| ^{i^+})\rfloor.
$$
For  $i\in \mathbf N$, $1\le h\le \beta$, $1\le \delta\le d_h$, 
 and $j_\delta^{(h)} =0$, \dots, $2^{\mu_h{(i)}}-1$, 
we set
\begin{align*}
&J\bigl(i, (j^{(1)}_1, \dots, j^{(1)}_{d_1}), (j^{(2)}_1, \dots, j^{(2)}_{d_2}), \dots , 
(j^{(\beta)}_1, \dots, j^{(\beta)}_{d_\beta})\bigr)
\\
&\quad=
\biggl\{
\sum_{h=1}^\beta
\sum_{\delta=1}^{d_h}
(L^{(h)}_{\delta}+L 2^{-\mu_h(i)}(j^{(h)}_\delta+ t^{(h)}_\delta)) \boldsymbol q^{(h)}_\delta
\biggm| 
0 \le t^{(h)}_{\delta} < 1 
\biggr\}
\end{align*}
and denote the collection of all such cubes by $\mathcal J(i)$. 
Let $\mathcal F_i $ be the $\sigma$-field on 
$\Omega$ generated by $\mathcal J(i)$. 
$\{\mathcal F_i\}$ forms a filtration on 
$(\Omega, \mathcal F, P_\Omega)$.
Let 
$$\widetilde {\mathcal F}_i= \{F\times [\,0,1)\mid F\in \mathcal F_i\}.$$
Clearly $\{\widetilde {\mathcal F}_i\}$ forms a filtration on 
$(\Omega\times [\,0,1), \mathcal F\times\mathcal B[\,0,1), P_\Omega\times \leb)$.
 
For $(\boldsymbol x, x)\in \Omega\times [\,0,1)$, we here put
$$
\widetilde T_i(\boldsymbol x, x) 
=
T_i(\boldsymbol x) 
=
\sum_{k\in \Delta_i}
f_a(A^k \boldsymbol x)
$$
and 
prove
\begin{equation}
\widetilde E(\widetilde T_i\mid \widetilde {\mathcal F}_{i-1})(\boldsymbol x, x) = 
E(T_i\mid \mathcal F_{i-1})(\boldsymbol x) = O(i^{-4}), 
\label{Eq:ticond}
\end{equation}
where $\widetilde E({}\cdot{}\mid{}\cdot{})$ denotes the conditional expectation on 
$\Omega\times[\,0,1)$ and $E({}\cdot{}\mid{}\cdot{})$ that on $\Omega$. 
The first equality is trivial. 
Take  $\boldsymbol x\in \Omega$ arbitrarily and take $J\in \mathcal J(i-1)$
such that $\boldsymbol x\in J$. 
We note that 
$$
E(X\mid \mathcal F_{i-1})(\boldsymbol x) =  \frac1{\Leb (J)}\int_J X(\boldsymbol y)\,d\boldsymbol y
.$$
By putting
$$
R_i=
\begin{pmatrix}
L2^{-\mu_1(i)} E_{d_1}& O & \cdots & O
\\
O   &L2^{-\mu_2(i)}E_{d_2}& \ddots & \vdots
\\
\vdots & \ddots & \ddots & O
\\
O   & \cdots & O & L2^{-\mu_\beta(i)}E_{d_\beta}
\end{pmatrix}, 
$$
where $E_\gamma$ is the unit matrix of size $\gamma\times \gamma$, 
we can write
$$J= \{\boldsymbol b + Q R_{i-1} \boldsymbol t \mid \boldsymbol t\in [\,0,1)^d\}$$
by using some $\boldsymbol b\in {\bf R}^d$. 
Changing variables by $\boldsymbol y = \boldsymbol b + QR_{i-1} \boldsymbol t$ 
and noting 
$$\frac{\partial \boldsymbol y }{\partial \boldsymbol t}=|\det(QR_{i-1})|=\Leb (J),$$
we have
\begin{align*}
&E(\exp(2\pi \iu \boldsymbol \xi A^k {}\cdot{})\mid \mathcal F_{i-1})(\boldsymbol x)
\\&
=
\frac{1}{\Leb(J)}
\int_J \exp(2\pi \iu \boldsymbol \xi A^k \boldsymbol y) \,d\boldsymbol y
\\
&=
\int_{[0,1)^d}
\exp(2\pi \iu \boldsymbol \xi Q B^k Q^{-1}(\boldsymbol b + QR_{i-1} \boldsymbol t)) \,d\boldsymbol t
\\&
=
{\exp(2\pi \iu c)}
\prod_{h=1}^\beta
\int_{[0,1)^{d_h}}
\exp(2\pi \iu  L2^{-\mu_h(i-1)} \widetilde{\boldsymbol \xi}_h B_h^k  \boldsymbol t_h) \,d\boldsymbol t_h
,\end{align*}
where 
$c=\boldsymbol \xi Q B^k Q^{-1}\boldsymbol b $, and 
$\widetilde{\boldsymbol\xi}_h\in \widehat {\bf R}^{d_h}$ and $\boldsymbol t_h\in {\bf R}^{d_h}$ 
are given by
$
\boldsymbol \xi Q
=
(\widetilde{\boldsymbol\xi}_1 , \dots, \widetilde{\boldsymbol\xi}_\beta)
$
and 
$
\boldsymbol t
=
\begin{pmatrix}
\boldsymbol t_1
\\
 \vdots
\\
\boldsymbol t_\beta
\end{pmatrix}
$.
If we write $\widetilde{\boldsymbol \xi}_h B_h^k = (\zeta^{(h)}_1, \dots, \zeta^{(h)}_{d_h})$, 
we have
\begin{align*}
&\int_{[0, 1)^{d_h}}
\exp(2\pi \iu  L 2^{-\mu_h(i-1)} \widetilde{\boldsymbol \xi}_h  B_h^k  \boldsymbol t_h) \,d\boldsymbol t_h
\\&
=
\prod _{\delta=1}^{d_h}
\int_{0}^{1}
\exp(2\pi \iu L2^{-\mu_h(i-1)}\zeta^{(h)}_\delta  t)\,dt
\\&=
\prod _{\delta=1}^{d_h}
\phi(\pi L2^{-\mu_h(i-1)}\zeta^{(h)}_\delta )
\exp(\pi \iu c' ),
\end{align*}
where $c'=L 2^{-\mu_h(i-1)}\zeta^{(h)}_\delta$, 
$\phi(x)=(\sin x)/x$ if $x\not=0$ and $\phi(0) = 1$. 

By (\ref{Eq:lowerb}), 
 there exists a $\delta(\boldsymbol \xi)$ such that 
$|\zeta^{(h(\boldsymbol \xi))}_{\delta(\boldsymbol \xi)}|
\ge 
{D_{3,a}}|\lambda_{h(\boldsymbol \xi)}|^k/2 
$. 
Hence we have
\begin{align*}
\phi(\pi L 2^{-\mu_{h(\boldsymbol \xi)}(i-1)}\zeta^{(h(\boldsymbol \xi))}_{\delta(h(\boldsymbol \xi))})
&\le 
2/\pi L 2^{-\mu_{h(\boldsymbol \xi)(i-1)}}D_{3,a}|\lambda_{h(\boldsymbol \xi)}|^k   
\\&
\le 
{2(i-1)^{4+5d/3}|\lambda_{h(\boldsymbol \xi)}|^{(i-1)^+}
}/{
 \pi D_{3,a}|\lambda_{h(\boldsymbol \xi)}|^{i^-}L}
\\&
\le 
{2i^{4+5d/3}\Lambda^{(i-1)^+-i^-}
}/{ \pi D_{3,a} L}
=O(i^{-5}).
\end{align*}
By 
$$
T_i(\boldsymbol y) = \sum_{k\in \Delta_i}\sum_{\boldsymbol \xi\in R_a}
\widehat f(\boldsymbol \xi) \exp(2\pi \iu \xi A^k \boldsymbol y)
,$$
we have
(\ref{Eq:ticond}). 

Secondly, we prove
\begin{equation}
\widetilde E(\widetilde T_i \mid \widetilde {\mathcal F}_i) (\boldsymbol x,x) 
- \widetilde T_i(\boldsymbol x,x) =
E(T_i \mid \mathcal F_i) (\boldsymbol x) - T_i(\boldsymbol x) = O(i^{-3})
.
\label{Eq:tiapprox}
\end{equation}
Assume that  $k\in \Delta_i$ and $\boldsymbol x\in J\in \mathcal J(i)$.
Again the first equality is trivial. We have
\begin{align*}
E(f_a(A^k {}\cdot {})\mid \mathcal F_i) (\boldsymbol x) - f_a(A^k \boldsymbol x)
&=
\frac1{\Leb(J)} \int _J (f_a(A^k \boldsymbol y) - f_a(A^k \boldsymbol x))\,d\boldsymbol y.
\end{align*}
By $\boldsymbol x$, $\boldsymbol y\in J$, we have
$$
\boldsymbol y - \boldsymbol x = 
\sum_{h=1}^\beta L2^{-\mu_h(i)} \sum_{\delta=1}^{d_h} t^{(h)}_\delta  \boldsymbol q^{(h)}_\delta  
$$ 
for some $-1< t^{(h)}_\delta<1$. 
By (\ref{Eq:upperb}), we have
$$\|A^k \boldsymbol q^{(h)} _\delta\|
\le D_{2,A} (\max_{\delta, h} \|\boldsymbol q^{(h)} _\delta\|) 
|\lambda _h|^{i^+} (i^+)^d
$$ 
and 
\begin{align*}
\|A^k \boldsymbol y - A^k \boldsymbol x\|
&\le 
\sum_{h=1}^\beta L 2^{-\mu_h(i)} \sum_{\delta=1}^{d_h} \|A^k \boldsymbol q^{(h)} _\delta\|
=O(i^{-4}). 
\end{align*}
 Lipschitz continuity of $f$ implies
$$
\bigl|E(f_a(A^k {}\cdot {})\mid \mathcal F_i) (\boldsymbol x) - f_a(A^k \boldsymbol x)\bigr|
=O(i^{-4})
,$$
and thereby (\ref{Eq:tiapprox}). 

Put 
$$Y_i = E(T_i \mid \mathcal F_i)- E(T_i \mid \mathcal F_{i-1})
\quad\hbox{and}\quad
\widetilde Y_i = \widetilde E(\widetilde T_i \mid \widetilde{\mathcal  F}_i)
- \widetilde E(\widetilde T_i \mid  \widetilde{\mathcal F}_{i-1}).
$$
Clearly $\{Y_i, \mathcal F_i\}$ and $\{\widetilde Y_i, \widetilde{\mathcal F}_i\}$ 
are martingale differences and $\widetilde Y_i(\boldsymbol x, x)= Y_i(\boldsymbol x)$. 
By combining (\ref{Eq:ticond}) and (\ref{Eq:tiapprox}), 
we have
\begin{equation}
\|\widetilde Y_i - \widetilde T_i\|_\infty =
\|Y_i - T_i\|_\infty =O(i^{-3}).
\label{Eq:ytapprox}
\end{equation}

By $\|T_i\|_\infty = O(i)$, we have $\|E(T_i \mid \mathcal F_i)\|_\infty$, 
$\|E(T_i \mid \mathcal F_{i-1})\|_\infty=
O(i)$, and $\|Y_i\|_\infty= O(i)$, which implies $\|Y_i+T_i\|_\infty=O(i)$ 
and 
$$\|Y_i^2-T_i^2\|_\infty=O(i^{-2}).$$
By $\|Y_i^2+T_i^2\|_\infty=O(i^2)$, we have
$$\|Y_i^4-T_i^4\|_\infty=O(1).$$
By the last inequality and (\ref{Eq:maxfourth}), 
we have
\begin{equation}\label{Eq:yifourth}
\widetilde E\widetilde Y_i^4 
=
\widetilde E\widetilde T_i^4 +O(1)
=
 E T_i^4 +O(1)
=
O(i^{4/3}).
\end{equation}

We have
\begin{align*}
&\biggl(\sum_{k\in \Delta_i} f_a(A^k \boldsymbol x) \biggr)^2 
=
\sum_{\boldsymbol \xi\in R_a}
\sum_{\boldsymbol \xi'\in R_a}
\widehat f(\boldsymbol \xi)
\widehat f(\boldsymbol \xi')
\sum_{k\in \Delta_i}
\sum_{k'\in \Delta_i}
\exp(2\pi \iu (\boldsymbol \xi A^k + \boldsymbol \xi' A^{k'})\boldsymbol x)
.
\end{align*}
Put
$$
v_i = 
\sum_{\boldsymbol \xi\in R_a}
\sum_{\boldsymbol \xi'\in R_a}
\widehat f(\boldsymbol \xi)
\widehat f(\boldsymbol \xi')
\sum_{k\in \Delta_i}
\sum_{k'\in \Delta_i}
{\mathbf 1}(\boldsymbol \xi A^k + \boldsymbol \xi' A^{k'}=\boldsymbol 0)
.$$
There exists an $l_0$ such that for all $\boldsymbol \xi \in R_a$ and $\boldsymbol \xi' \in R_a$, 
  $\boldsymbol \xi A^l + \boldsymbol \xi' \not =\boldsymbol 0$ 
and 
  $\boldsymbol \xi  + \boldsymbol \xi' A^l\not=\boldsymbol 0$ 
hold for $l> l_0$. 
If $l\le l_0$ and 
$\boldsymbol \xi  + \boldsymbol \xi'A^l =\boldsymbol 0$, we have
$$
0\le i-{} ^\#\{(k, k')\in \Delta_i^2\mid 
\boldsymbol \xi A^k + \boldsymbol \xi' A^{k'}=\boldsymbol 0\} 
=i- (i-l+1)\vee 0\le l_0. 
$$
By noting 
$$
i\sigma^2(f_a)
=
i
\sum_{\boldsymbol \xi\in R_a}
\sum_{\boldsymbol \xi'\in R_a}
\widehat f(\boldsymbol \xi)
\widehat f(\boldsymbol \xi')
{\mathbf 1}\biggl(
{\boldsymbol \xi A^l + \boldsymbol \xi' =\boldsymbol 0
\hbox{ or }
\boldsymbol \xi  + \boldsymbol \xi' A^l=\boldsymbol 0
\atop
\hbox{ for some }l\ge 0\hfill}
\biggr), 
$$
we have
\begin{equation}
\label{Eq:viap}
| v_i - i\sigma^2(f_a)|
\le
l_0
\sum_{\boldsymbol \xi\in R_a}
\sum_{\boldsymbol \xi'\in R_a}
|\widehat f(\boldsymbol \xi)
\widehat f(\boldsymbol \xi')|. 
\end{equation}

For $\boldsymbol x\in \Omega$, take $J\in \mathcal F_{i-1}$ such that $\boldsymbol x\in J$. 
Under the condition $\boldsymbol \xi A^k + \boldsymbol \xi' A^{k'}\not=\boldsymbol 0$, 
in the same way as before, we have
\begin{align*}
&E( \exp(2\pi \iu (\boldsymbol \xi A^k + \boldsymbol \xi' A^{k'}){}\cdot{})
\mid
\mathcal F_{i-1})(\boldsymbol x)
\\&
=
\frac{1}{\Leb(J)}
\int_J \exp(2\pi \iu (\boldsymbol \xi A^k + \boldsymbol \xi' A^{k'})\boldsymbol y)\,d\boldsymbol y
\\&=
{\exp(2\pi \iu c) }
\prod_{h=1}^\beta
\int_{[0,1)^{d_h}}
\exp(2\pi \iu   (\widetilde{\boldsymbol \xi}_hB_h^{k-k'}+ \widetilde{\boldsymbol \xi}'_h)
 B_h^{k'} L2^{-\mu_h(i-1)} \boldsymbol t_h) \,d\boldsymbol t_h
\end{align*}
and  by 
$m_{i(h({\boldsymbol \xi}))}(\widetilde {\boldsymbol \xi}_{h({\boldsymbol \xi})} 
B_{h({\boldsymbol \xi})}^{k-k'} + \widetilde {\boldsymbol \xi}'_{h({\boldsymbol \xi})})
\ge D_{4,a}$, as before we have
$$
E( \exp(2\pi \iu (\boldsymbol \xi A^k + \boldsymbol \xi' A^{k'}){}\cdot{})
\mid
\mathcal F_{i-1})(\boldsymbol x)
= O(i^{-5})
.$$
Since the number of choices of $(\boldsymbol \xi, \boldsymbol \xi')$ is finite 
and the number of choices of $(k,k')$ is at most $i$, 
we have
$$
E( T_i^2 - v_i
\mid
\mathcal F_{i-1})(\boldsymbol x)
= O(i^{-4})
.$$
By putting 
$$\beta_M = \sum_{i=1}^M v_i
\quad\hbox{and}\quad
l_M = {}^\#\Delta_1+ \cdots +{}^\#\Delta_M
,$$
we have
$$
\biggl\|
\sum_{i=1}^M
E(T^2_i \mid \mathcal F_{i-1}) -\beta_M
\biggr\|_\infty
=O(1)
\quad\hbox{and}\quad
|\beta_M - l_M \sigma^2(f_a)|
\le D_{10,f_a}M
$$
for some $D_{10,f_a}< \infty$. 
Since we have
$$
\biggl\|
\sum_{i=1}^M
\bigl(
E(T^2_i \mid \mathcal F_{i-1}) 
-
E(Y^2_i \mid \mathcal F_{i-1}) 
\bigr)
\biggr\|_\infty
\le
\sum_{i=1}^M
\|T^2_i- Y^2_i\|_\infty
=O(1)
,$$
we have
\begin{equation}
\|\widetilde V_M -\beta_M\|_\infty \le D_{11},
\label{Eq:vmasymp}
\end{equation}
where 
$$\widetilde V_M = \sum_{i=1}^M \widetilde E(\widetilde Y^2_i \mid \widetilde {\mathcal F}_{i-1}) 
=
\sum_{i=1}^M E(Y^2_i \mid \mathcal F_{i-1}) 
\quad\hbox{and}\quad D_{11}<\infty.$$

Now we use the theorem which is a version of Strassen's theorem (Theorem 4.4 of \cite{1967S}).

\begin{theorem}[Monrad-Philipp \cite{1991MPh} Theorem 7. 
This version is Lemma A.4 in  Philipp \cite{baker}]
Let  $\{  \widetilde Y_i,  \widetilde {\mathcal F}_i\}$ be  
a square integrable martingale difference 
satisfying
$$
  \widetilde V_M = \sum_{i=1}^M \widetilde E(  \widetilde Y_i^2 \mid   \widetilde {\mathcal F}_{i-1}) 
\to \infty 
\ \hbox{a.s.}\ \hbox{ and }\
\sum_{i=1}^\infty \widetilde E\biggl( 
\frac{  \widetilde Y_i^2 
{\bf 1}  _{\{   \widetilde Y_i^2 \ge \psi(  \widetilde V_i) \}}
} { \psi(\widetilde   V_i)} \biggr) 
< \infty
$$
for some non-decreasing  $\psi$ 
such that 
$\psi(x)(\log x)^{\alpha}/ x$ is non-increasing for some  $\alpha > 50$
and   $\lim_{x\to \infty}\psi(x)=\infty$. 
If there exists a uniformly distributed random variable
$\widetilde U$ which is independent of  $\{\widetilde   Y_n\}$, 
there exists a sequence $\{Z_i\}$ of standard normal i.i.d. 
such that 
\begin{equation}
\sum_{i\ge 1}
 \widetilde  Y_{i}{\bf 1}_{\{  \widetilde V_i\le t\}}
= 
G_t
+ o\bigl( t^{1/2} (\psi(t)/ t) ^{1/50}\bigr)
\quad(t\to\infty)\quad\hbox{a.s.,}
\label{Eq:monrad}
\end{equation}
where 
$$G_t = \sum_{i\le t}  Z_i.$$
\label{thm:monrad}
\end{theorem}

From now on, we regard $f_a(A^k \boldsymbol x)$ as a random variable on 
$\Omega\times [\,0,1)$. 

Recall that $ \sigma^2(f_a)> 0$ and put $\psi(x) = x^{4/5}$. 
One can see that 
$$
\beta_M \sim l_M \sigma^2(f_a)
\quad\hbox{and}\quad
\widetilde V_M\to \infty
\quad\hbox{a.s.}
$$ by (\ref{Eq:viap}) and (\ref{Eq:vmasymp}). 

Since 
  $$ \widetilde  V_M\ge \beta_M-D_{11}\ge \sigma^2(f_a) l_M/2$$ holds for large $M$, 
we see by (\ref{Eq:yifourth}) 
that
$$
\sum\widetilde  E\bigl( 
{  \widetilde Y_i^2 {\bf 1}_{ \{   \widetilde Y_i^2 
\ge \psi(  \widetilde V_i) \}} }/{ \psi(  \widetilde V_i)} 
\bigr) 
\le
\sum
{\widetilde E  \widetilde Y_i^4 }/{ \psi^2(\sigma^2(f_a) l_i/2)}
\ll\sum { i^{4/3}}/{ l_i^2}< \infty.
$$
Because of $\beta_{M+1}-\beta_M = v_{M+1}\to\infty$, 
we obtain 
$$\widetilde V_M \le \beta_M + D_{11} < \beta_{M+1} - D_{11} < \widetilde V_{M+1}$$ 
for large M, and $\widetilde V_i \le \beta_M + D_{11}$ becomes equivalent to $i\le M$. 
By putting $t = \beta_M + D_{11}$ in (\ref{Eq:monrad}) and by noting (\ref{Eq:ytapprox})
we have
\begin{equation}
\sum_{i=1}^M   \widetilde T_i 
=
\sum_{i=1}^M   \widetilde Y_i +O(1)
=
G_{\widetilde V_M+D_{11}}
+ o\bigl(  l_M ^{249/250} \bigr), 
\quad\hbox{a.s.}
\label{Eq:yiasip}
\end{equation}

Put 
$$\Delta_M^\flat = \Delta_1\cup \dots \cup \Delta_M
\quad\hbox{and}\quad
\Delta_M^\natural= \Delta_1'\cup \dots \cup \Delta_M'
.$$
By $^\#\Delta_M^\natural= O(M\log M)$, 
we obtain 
$$i_M^+ = l_M + {}^\#\Delta_M^\natural \sim l_M.$$

Note that 
\begin{equation}
F_M^\flat :=
\max_{m\in \Delta_M} \biggl |\sum_{k=m}^{i_M^+} f_a(A^k {}\cdot{}) 
\biggr|
\le M^{2/3} \|f\|_\infty
=O(l_M^{2/5}).
\label{Eq:maxflat}
\end{equation}
We can prove
\begin{equation}
F_M^\natural
:=
\max _{m\in \Delta_M^\natural} \biggl|\sum_{k\in \Delta_M^\natural, k\le m} f_a(A^k {}\cdot{}) \biggr|
=
o(l_M^{19/40})
\quad\hbox{a.s.,}
\label{Eq:maxnatural}
\end{equation}
since 
 (\ref{Eq:maxfourth}) implies
$$
E\bigl(
(l_M^{-19/40} F_M^\natural)^4
\bigr)
= O(M^{-7/6} (\log M)^2)
,$$
and is summable in $M$. 

Hence for $N\in \Delta_M'\cup \Delta_M$, 
we obtain 
\begin{equation}
\biggl|
\sum_{k=1}^N f_a(A^k {}\cdot{})
-
\sum_{i=1}^M \widetilde T_i({}\cdot{})
\biggr|
\le F_M^\flat + F_M^\natural 
=
o(l_M^{19/40}) 
=
o(N^{19/40}).
\label{Eq:tierror}
\end{equation}
Now we apply  the following 
result on the fluctuation  of the standard Wiener process $W(t)$
due to Cs\"org\H o-R\'ev\'esz 
((1.2.4) in Theorem 1.2.1 of \cite{1981CsR}). 
For non-decreasing $a_T$ such that $0< a_T \le T$ and $T/a_T$ is 
non-decreasing, 
we have
\begin{equation*}
\varlimsup_{T\to \infty}
\sup_{0\le t\le T-a_T}
\sup_{0 \le s \le a_T}
\frac
{|W(t+s) - W(t)| }
{\sqrt{2a_T (\log (T/a_T) + \log \log T)}}
= 1
, \quad\hbox{a.s.}
\end{equation*}
By putting 
$$T= i_M^+ \sigma^2(f_a)\quad\hbox{and}\quad
a_T = {}^\#(\Delta_M'\cup \Delta_M)+ D_{10,f_a}M = O(M\log M),
$$
we have
\begin{equation}
\bigl|
G_{\sigma^2(f_a) N } 
- 
G_{\beta_M+ D_{11}}
\bigr|
=O(M^{1/2}\log M) 
= O(N^{3/10}\log N)
\quad\hbox{a.s.}
\label{Eq:zierror}
\end{equation}
By combining  (\ref{Eq:yiasip}), (\ref{Eq:tierror}) and (\ref{Eq:zierror}) 
we have (\ref{Eq:asip}). 
\end{proof}

\section{Variance control}
We first prove that the series in (\ref{Eq:lv}) is absolutely convergent. 
By using  the convention $\widehat f(\boldsymbol \xi ) = 0$ for $\boldsymbol \xi \notin {\bf Z}^d$, 
we have
$$|\widehat f(\boldsymbol \xi) \widehat f(\boldsymbol \xi')|
{\mathbf 1}(\boldsymbol \xi+ \boldsymbol \xi'A^l=\boldsymbol 0)
=
|\widehat f(\boldsymbol \xi'A^l) \widehat f(\boldsymbol \xi')|.$$
Hence by noting 
$$\|\boldsymbol \xi A^l\|\ge \|\boldsymbol \xi A^l\|_2/\sqrt d\ge q^l 
\|\boldsymbol \xi\|_2/\sqrt d
\ge q^l/\sqrt d$$
 for $\boldsymbol \xi\in {\bf Z}^d\setminus \{\boldsymbol 0\}$, we have
\begin{equation}\label{Eq:serconv}
\begin{aligned}
&\sum_{l\ge 0}  \sum_{\boldsymbol \xi, \boldsymbol \xi'\in {\bf Z}^d}
|\widehat f(\boldsymbol \xi) \widehat f(\boldsymbol \xi')|
{\mathbf 1}(\boldsymbol \xi+ \boldsymbol \xi'A^l=\boldsymbol 0)
\\&=
\sum_{l\ge 0}  \sum_{\boldsymbol \xi\in {\bf Z}^d}
\bigl|\widehat f(\boldsymbol \xi) \widehat f(\boldsymbol \xi A^l)\bigr|
\\&\le
\sum_{l\ge 0}  
\biggl(\sum_{\boldsymbol \xi\in {\bf Z}^d}
\bigl|\widehat f(\boldsymbol \xi)\bigr|^2
\sum_{\boldsymbol \xi\in {\bf Z}^d}
\bigl| \widehat f(\boldsymbol \xi A^l)\bigr|^2
\biggr)^{1/2}
\\&\le 
\|f\|_{L^2[0,1)^d}
\sum_{l\ge 0}  
\|f-f_{q^l/\sqrt d}\|_{L^2[0,1)^d}
< \infty.
\end{aligned}
\end{equation}

Let $\Gamma \subset {\bf R}^d$ be a bounded measurable set.
Since we have $L^2$ convergence 
$f_a \to f$ 
on $[\,0,1)^d$ and by periodicity, we have the convergence 
on any bounded set $\Gamma'$. 

By changing variable 
we have $L^2(\Gamma)$ convergence 
$f_a(A^k \boldsymbol x) \to f(A^k \boldsymbol x)$. 
Hence we have $L^1(\Gamma)$ convergence 
 $f_a(A^k \boldsymbol x)f_a(A^{k'} \boldsymbol x) 
\allowbreak\to f(A^k \boldsymbol x)f(A^{k'} \boldsymbol x)$, 
and hence convergence in measure. 
Thus we have the convergence 
$$
\biggl(\sum_{k=1}^N f_a(A^k \boldsymbol x)\biggr)^2 
\to 
\biggl(\sum_{k=1}^N f(A^k \boldsymbol x)\biggr)^2$$
in measure on $\Gamma$ under the measure 
$\rho(\boldsymbol x)\,d\boldsymbol x$. 
That is why we can apply Fatou's Lemma and have 
\begin{align*}
&
\hskip-1em
\int_{\Gamma} 
\biggl( \sum_{k=1}^N f(A^k \boldsymbol x)\biggr)^2 \,d\boldsymbol x
\\&\le 
D_{6, \Gamma}
\int_{\Gamma} 
\biggl( \sum_{k=1}^N f(A^k \boldsymbol x)\biggr)^2 \rho(\boldsymbol x)\,d\boldsymbol x
\\&
\le
D_{6, \Gamma}
\varliminf_{a\to\infty}
\int_{\Gamma} 
\biggl( \sum_{k=1}^N f_a(A^k \boldsymbol x)\biggr)^2 \rho(\boldsymbol x)\,d\boldsymbol x
\\&
\le
2D_{6, \Gamma}
\sum_{\boldsymbol \xi, \boldsymbol \xi' \in {\bf Z}^d} 
\sum_{k\le l\le N}
\bigl|\widehat f(\boldsymbol \xi)\widehat f(\boldsymbol \xi')\bigr|
\mathbf 1 \bigl(\|\boldsymbol \xi A^k + \boldsymbol \xi' A^l \| \le 1/\pi\sqrt d\,\bigr).
\end{align*}
By  (\ref{Eq:expxi}), we obtain
$$\bigl\|\boldsymbol \xi A^k  + \boldsymbol \xi' A^l\bigr\|
\ge 
\bigl\|(\boldsymbol \xi   + \boldsymbol \xi' A^{l-k})A^k\bigr\|_2/\sqrt d
\ge
\bigl\|\boldsymbol \xi   + \boldsymbol \xi' A^{l-k}\bigr\|_2/\sqrt d
.$$
For $\boldsymbol \eta\in \mathbf R^d$, 
there is at most one $\boldsymbol \xi\in {\bf Z}^d$ such that 
$\|\boldsymbol \eta -\boldsymbol \xi\|_2\le 1/\pi$. 
In case such $\boldsymbol \xi$ exists, let $\chi(\boldsymbol \eta)= \boldsymbol \xi$, 
and $\chi(\boldsymbol \eta)= \boldsymbol 0$ otherwise. 
If $\boldsymbol \xi\not=\boldsymbol \xi'$, then 
$\|\boldsymbol \xi A^m- \boldsymbol \xi'A^m\|_2 \ge \|\boldsymbol \xi - \boldsymbol \xi'\|_2 
\ge 1$, and $\chi(\boldsymbol \xi A^m) \not=\chi( \boldsymbol \xi'A^m)$ if 
$\chi(\boldsymbol \xi A^m) \not=\boldsymbol 0$ and 
$\chi(\boldsymbol \xi'A^m) \not=\boldsymbol 0$. 
If $\chi(\boldsymbol \xi A^m) \not=\boldsymbol 0$, then 
$$\|\chi(\boldsymbol \xi A^m)\|\ge \|\chi(\boldsymbol \xi A^m)\|_2/\sqrt d
\ge (\|\boldsymbol \xi A^m\|_2- 1/\pi)/\sqrt d
\ge q^m/2\sqrt d
.$$
By 
\begin{align*}
\bigl|\widehat f(\boldsymbol \xi)\widehat f(\boldsymbol \xi')\bigr|
\boldsymbol 1\bigl(\bigl\|\boldsymbol \xi A^k  + \boldsymbol \xi' A^l\bigr\|\le 1/\pi\sqrt d\bigr)
&\le 
\bigl|\widehat f(\boldsymbol \xi)\widehat f(\boldsymbol \xi')\bigr|
\boldsymbol 1\bigl(\bigl\|\boldsymbol \xi A^k  + \boldsymbol \xi' A^l\bigr\|_2\le 1/\pi\bigr)
\\&=\bigl|\widehat f(\chi(\boldsymbol \xi' A^{k-l}))\widehat f(\boldsymbol \xi')\bigr|
,
\end{align*}
we have
\begin{align*}
&\sum_{\boldsymbol \xi, \boldsymbol \xi' \in {\bf Z}^d} 
\sum_{k\le l\le N}
\bigl|\widehat f(\boldsymbol \xi)\widehat f(\boldsymbol \xi')\bigr|
\boldsymbol 1\bigl(\bigl\|\boldsymbol \xi A^k  + \boldsymbol \xi' A^l\bigr\|\le 1/\pi\sqrt d\bigr)
\\&
\le 
 \sum_{k\le l\le N} 
\sum_{\boldsymbol \xi \in {\bf Z}^d} 
\bigl|\widehat f(\boldsymbol \xi)\widehat f(\chi(\boldsymbol \xi A^{k-l}))\bigr|
\le  
N \sum_{m\le N} 
\sum_{\boldsymbol \xi \in {\bf Z}^d} 
\bigl|\widehat f(\boldsymbol \xi)\widehat f(\chi(\boldsymbol \xi A^{m}))\bigr|
\\&
\le
N \sum_{m\le N} 
\biggl(\sum_{\boldsymbol \xi \in {\bf Z}^d} 
|\widehat f(\boldsymbol \xi)|^2
\sum_{\boldsymbol \xi \in {\bf Z}^d} 
|\widehat f(\chi(\boldsymbol \xi A^{m}))|^2\biggr)^{1/2}
\\&
\le N \|f\|_{L^2[0,1)^d}\sum_{m\le N} \|f-f_{q^m/2\sqrt d}\|_{L^2[0,1)^d} .
\end{align*}
We have proved
\begin{equation}\label{Eq:varcntl}
\int_{\Gamma} 
\biggl( \sum_{k=1}^N f(A^k \boldsymbol x)\biggr)^2 \,d\boldsymbol x
 \le \widehat CN \|f\|_{L^2[0,1)^d}
,\end{equation}
where $\widehat C= D_{6, \Gamma}\sum_{m=0}^\infty \|f-f_{q^m/2\sqrt d}\|_{L^2[0,1)^d}$.

If $\boldsymbol \xi A^{k'} + \boldsymbol \xi'\not=0$, by Riemann-Lebesgue Lemma we have
\begin{align*}
\int_\Gamma \exp(2\pi \iu (\boldsymbol \xi A^{k'} + \boldsymbol \xi') 
A^k \boldsymbol x)\,d\boldsymbol x
=
\widehat {\mathbf 1}_\Gamma
 ((\boldsymbol \xi A^{k'} + \boldsymbol \xi') A^k)
\to 0 
\end{align*}
as $k\to\infty$. 
Hence for a trigonometric polynomial $f_a$, we have (\ref{Eq:limvar}) as below: 
\begin{align*}
&\frac1{N\Leb(\Gamma)} \int_\Gamma
\biggl( \sum_{k=1}^N f_a(A^k  \boldsymbol x)\biggr)^2 \,d\boldsymbol x
\\&
=
\sum_{\boldsymbol \xi\in R_a}
\sum_{\boldsymbol \xi'\in R_a}
\widehat f(\boldsymbol \xi)
\widehat f(\boldsymbol \xi')
\sum_{l=0}^{N-1} 
\frac{2-\delta_{l,0}}{N\Leb(\Gamma)}
\sum_{k=1}^{N-l} 
\int_\Gamma
\exp(2\pi\iu (\boldsymbol \xi + \boldsymbol \xi' A^l) A^k \boldsymbol x)\,d\boldsymbol x
\\
&\to
\sum_{\boldsymbol \xi\in R_a}
\sum_{\boldsymbol \xi'\in R_a}
\widehat f(\boldsymbol \xi)
\widehat f(\boldsymbol \xi')
\sum_{l=0}^{\infty}
(2-\delta_{l,0}) 
{\mathbf 1}(\boldsymbol \xi + \boldsymbol \xi' A^l =\boldsymbol0)
=\sigma^2(f_a).
\end{align*}
Because of the absolute convergence (\ref{Eq:serconv}), 
we obtain
$$
\sigma(f_a) \to \sigma(f).
$$

\begin{proof}[Proof of Corollary \ref{cor:cltlil}]
Put $$
X_N^a(\boldsymbol x)= 
\frac1{\sqrt N}
\sum_{k=1}^N f_a(A^k \boldsymbol x)
.$$
First we assume $\sigma^2(f_a)> 0$ and 
prove the central limit theorem under the measure $P_\Omega$. 
By using Proposition \ref{prop:asip} we have
$$
X_N^a
-
{G_{N\sigma^2(f_a)}}/{\sqrt N}
\to 0
\quad\hbox{a.s.}
$$
and the law of  ${G_{N\sigma^2(f_a)}}/{\sqrt N}$ is $N(0, \sigma^2(f_a))$, 
we see that the limit law under $P_\Omega\times \leb$ of 
$X_N^a $ is $N(0, \sigma^2(f_a))$. 
Since the law of $X_N^a $ under $P_\Omega\times \leb$ 
is identical with the law under $P_\Omega$, 
we see that $N(0, \sigma^2(f_a))$  is also the limit law under $P$ of 
$X_N^a $. 

Now, denote by $\mathcal G$ the class of integrable functions $g$ on ${\bf R}^d$ 
satisfying
\begin{equation}\label{Eq:cltg}
\lim_{N\to \infty}
\int_{\{X_N^a\le t\}}
g(\boldsymbol x)\,d\boldsymbol x
=
\Phi_{\sigma^2(f)}(t) 
\int_{{\bf R}^d}
g(\boldsymbol x)\,d\boldsymbol x
\quad(t\in {\bf R}), 
\end{equation}
and denote by $\mathcal H$ the collection of 
${\bf 1}_\Omega\in \mathcal G$ 
given by (\ref{Eq:Omega}) using arbitrary $L>0$ and $L^{(h)}_{\delta}\in {\bf R}$ 
($\delta=1$, \dots, $d_h$, $h=1$, \dots, $\beta$). 
We can easily show 
\begin{gather}\label{Eq:linear}
g_1, g_2 \in \mathcal G, \quad \alpha_1, \alpha_2\in{\bf R}
 \implies \alpha_1 g_1 + \alpha_2g_2\in \mathcal G,
\\ \label{Eq:closed}
g_1, g_2, \dots \in \mathcal G,\quad 
\lim_{k\to\infty}\|g-g_k\|_{L^1({\bf R}^d)}=0
 \implies g\in \mathcal G.
\end{gather}
By the above argument we have already proved $\mathcal H \subset \mathcal G$, 
and by (\ref{Eq:linear}) we can see that any  simple function 
which is given as a linear combination of indicator functions with 
supports in  $\mathcal H$ belongs to $\mathcal G$. 
Since any continuous function with compact support 
can be arbitrarily approximated in the sense of $L^1({\bf R}^d)$ by such simple function,  
we see that it belongs to $\mathcal G$. 
Since any  integrable function with compact support 
can be arbitrarily  approximated in the sense of $L^1({\bf R}^d)$ 
by a continuous function with compact support, 
we see that it belongs to $\mathcal G$. 
Hence we can see that (\ref{Eq:clt}) holds under any probability measure $P$ on ${\bf R}^d$ 
which is absolutely continuous with respect to the Lebesgue measure.

In case when $\sigma^2(f_a)=0$, 
by (\ref{Eq:limvar}) we see that  the limit law of $X_N^a$ is the delta measure concentrated on 
$\boldsymbol 0$, that is $N(0, 0)$. 
It proves (\ref{Eq:clt}) for $\sigma^2(f_a)=0$. 
\end{proof}

\begin{proof}[Proof of Theorem \ref{thm:main}]

Put 
$$X_N(\boldsymbol x)= 
\frac1{\sqrt N}
\sum_{k=1}^N f(A^k \boldsymbol x)
\quad\hbox{and}\quad
Y_N^a = X_N-X_N^a
.$$
We have proved
$$
E|Y_N^a|^2 \le \widetilde C \|f-f_a\|_{L^2[0,1)^d}
,$$
which implies
$$
P\bigl(|Y_N^a| \ge  b_a\bigr)
\le \widetilde C b_a
,$$
where $b_a= \|f-f_a\|_{L^2[0,1)^d}^{1/3}$. 
By 
\begin{align*}
P(X_N^a\le t-b_a ) -P(|Y_N^a|\ge b_a) &\le P(X_N\le t) 
\\&
\le P(X_N^a\le t+b_a ) +P(|Y_N^a|\ge b_a)
\end{align*}
we have
\begin{align*}
\Phi_{\sigma^2(f_a)} (t - b_a) - \widetilde C b_a
&\le 
\varliminf_{N\to\infty}P(X_N \le t)
\\&\le 
\varlimsup_{N\to\infty}P(X_N \le t)
\\&\le
\Phi_{\sigma^2(f_a)} (t + b_a) + \widetilde C b_a
.
\end{align*}
By letting $a\to\infty$, we have (\ref{Eq:CLT}). 

By putting 
$$\|X\|_{\widetilde L^2(\Gamma)}= 
\biggl(\frac{1}{\Leb(\Gamma)}\int_\Gamma X^2(\boldsymbol x)\,d\boldsymbol x\biggr)^{1/2},
$$
we have
$$
\|X_N^a\|_{\widetilde L^2(\Gamma)}
-
\|Y_N^a\|_{\widetilde L^2(\Gamma)}
\le
\|X_N\|_{\widetilde L^2(\Gamma)}
\le 
\|X_N^a\|_{\widetilde L^2(\Gamma)}
+
\|Y_N^a\|_{\widetilde L^2(\Gamma)}
$$
and hence  by letting $N\to \infty$, 
\begin{align*}
\sigma(f_a) - ({\widetilde C b_a^3})^{1/2}
&\le \varliminf_{N\to\infty}\|X_N\|_{\widetilde L^2(\Gamma)}
\le \varlimsup_{N\to\infty}\|X_N\|_{\widetilde L^2(\Gamma)}
\le \sigma(f_a) + ({\widetilde C b_a^3})^{1/2}
.
\end{align*}
By letting $a\to\infty$, 
we have (\ref{Eq:limvar}). 
\end{proof}

\section{Acknowledgement}
The author thank the referee for his or her valuable advices, 
especially for the information on the literature \cite{fan0}.

{}
\end{document}